\documentclass[12pt]{article}
\usepackage{arxiv}
\usepackage[utf8]{inputenc} 
\usepackage[T1]{fontenc}    
\usepackage{hyperref}       
\usepackage{url}            
\usepackage{booktabs}       
\usepackage{amsfonts}       
\usepackage{microtype}      
\usepackage{lipsum}

\usepackage{mathrsfs}
\usepackage{amsmath}
\usepackage{amsfonts}
\usepackage{amsthm}
\usepackage{graphicx}
\usepackage{array}
\usepackage{cases}
 \usepackage{microtype}
 
\usepackage[open,openlevel=2,atend]{bookmark}

\usepackage[abbrev,msc-links,backrefs]{amsrefs}
\usepackage{doi}
\usepackage[ruled,vlined,english,linesnumbered]{algorithm2e}

\renewcommand{\PrintDOI}[1]{\doi{#1}}

\newtheorem{thm}{Theorem}[section]

\newtheorem{lem}[thm]{Lemma}
\newtheorem{prop}{Proposition}

\newcommand{\phiinc}[1]{\phi\big({\rm Inc}(#1)\big)}
\newcommand{\varphiinc}[1]{\varphi\big({\rm Inc}(#1)\big)}

\newcommand{\phiprimeinc}[1]{\phi'\big({\rm Inc}(#1)\big)}

\newcommand{\inc}[1]{{\rm Inc}(#1)}

\linespread{1.4}

\title{Conflict-free incidence coloring of outer-1-planar graphs}

\author{
  Mengke Qi\\
  School of Mathematics and Statistics\\
  Xidian University\\
  Xi'an 710071, China \\
  \texttt{mkqi@stu.xidian.edu.cn} \\
  \And
 Xin Zhang
 \thanks{
 Supported by the Fundamental
Research Funds for the Central Universities (No.\,QTZX22053) and the National Natural Science Foundation of China (No.\,11871055).
}
 \thanks{Corresponding author.}
 \\
  School of Mathematics and Statistics\\
  Xidian University\\
  Xi'an 710071, China \\
  \texttt{xzhang@xidian.edu.cn} \\
  }

\begin{document}
\maketitle

\begin{abstract}\baselineskip 0.60cm
An incidence of a graph $G$ is a vertex-edge pair $(v,e)$ such that $v$ is incidence with $e$.
A conflict-free incidence coloring of a graph is a coloring of the incidences in such a way that
two incidences $(u,e)$ and $(v,f)$ get distinct colors if and only if they conflict each other, i.e.,\,
(i) $u=v$, (ii) $uv$ is $e$ or $f$, or (iii) there is a vertex $w$ such that $uw=e$ and $vw=f$.
The minimum number of colors used among all conflict-free incidence colorings of a graph is the conflict-free incidence chromatic number. 
A graph is outer-1-planar if it can be drawn in the plane so that vertices are on the outer-boundary and each edge is crossed at most once.
In this paper, we show that the conflict-free incidence chromatic number of an outer-1-planar graph 
with maximum degree $\Delta$ is either $2\Delta$ or $2\Delta+1$ unless the graph is a cycle on three vertices, and moreover, all outer-1-planar graphs with conflict-free incidence chromatic number
$2\Delta$ or $2\Delta+1$ are completely characterized. An efficient algorithm for constructing an optimal conflict-free incidence coloring of a
connected outer-1-planar graph is given.

\vspace{3mm}\noindent \emph{Keywords}: outer-1-planar graph; incidence coloring; combinatorial algorithm; channel assignment problem.
\end{abstract}

\section{Introduction}\label{sec:1}

For groups of geographically separated people who need to keep in continuous voice communication, such as aircraft pilots and air traffic controllers, two-way radios are widely used \cite{enwiki:1038428051}. This motivates us to investigate how to design a 
two-way radio network efficiently and economically. 

In a two-way radio network, each node represents a two-way radio that can 
both transmit and receive radio waves and there is a link between two nodes if and only if they may 
contact each other. Waves can transmit between two linked two-way radios in two different directions simultaneously. For a link $L$  connecting two nodes $N_i$ and $N_j$  in a two-way radio network, it is usually assigned with two channels $C(N_i,N_j)$ and $C(N_j,N_i)$. The former one is used to transmit waves from $N_i$ to $N_j$ and the later one is used to transmit waves from $N_j$ to $N_i$. 
The \textit{associated channel box} $B(N_i)$ of a node $N_i$  in a two-way radio network is a multiset of channels 
$C(N_i,N_j)$ and $C(N_j,N_i)$ such that $N_i$ is linked to $N_j$.
An efficient way to avoid possible interference is to assign channels to links so that every radio receives a rainbow associated channel box (in other words, every two channels in $B(N_i)$ for every node $N_i$ in the network are apart). For the sake of economy, while assigning channels to a two-way radio network, the fewer channels are used, the better. This can be modeled by the conflict-free incidence coloring of graphs.

From now on, we use the language of graph theory and then define conflict-free incidence coloring. 
We consider finite graphs
and use $V(G)$ and $E(G)$ to denote the
vertex set and the edge set of a graph $G$.
The \textit{degree} $d_G(v)$ of a vertex $v$ in a graph $G$ 
is the number of edges incident with $v$ in $G$.
We use $d(v)$ instead of $d_G(v)$ whenever the graph $G$ is clear from the content.
We call $\Delta(G)=\max\{d_G(v)~|~v\in V(G)\}$ and $\delta(G)=\min\{d_G(v)~|~v\in V(G)\}$ the \textit{maximum degree} and the \textit{minimum degree} of a graph $G$. Other undefined notation is referred to  \cite{Diestel2017}.

Let $v$ be a vertex of $G$ and $e$ be an edge incident with $v$. We call the vertex-edge pair $(v,e)$ an \textit{incidence} of $G$.
For an edge $e=uv\in E(G)$, let $\inc{e}=\{(u,e),(v,e)\}$, and for a vertex $v\in V(G)$, let $\inc{v}=\cup_{e\ni v}\inc{e}$.
For a subset $U\subseteq E(G)$, let $\inc{U}=\{\inc{e}~|~e\in U\}$.
Two incidences $(u,e)$ and $(v,f)$ are \textit{conflicting}
if (i) $u=v$,  (ii) $uv$ is $e$ or $f$, or (iii) there is a vertex $w$ such that $uw=e$ and $vw=f$.
In other words, two incidences are conflicting if and only if there is a vertex $w$ such that both of them belong to $\inc{w}$. 

A \textit{conflict-free incidence $k$-coloring} of a graph $G$ is a coloring of the incidences using $k$ colors in such a way that every two conflicting incidences get distinct colors. 
The minimum integer $k$ such that $G$ has a conflict-free incidence $k$-colorable is the \textit{conflict-free incidence chromatic number} of $G$, denoted by $\chi^{c}_i(G)$. 
For a conflict-free incidence coloring $\varphi$ of a graph $G$ and an edge $e=uv\in E(G)$, we use $\varphiinc{e}$ to denote the set $\{\varphi(u,e),\varphi(v,e)\}$. For a subset $U\subseteq E(G)$, let $\varphiinc{U}=\{\varphiinc{e}~|~ e\in U\}$.

We look back into  the channel assignment problem of two-way radio networks and explain why 
the conflict-free incidence coloring of graphs can model it.
Let $G$ be the graph representing the two-way radio network and let $L=N_iN_j$ be an arbitrary link, i.e, $L\in E(G)$. Assigning two channels $C(N_i,N_j)$ and $C(N_j,N_i)$ to $L$ is now equivalent to coloring the incidences $(N_i,L)$ and $(N_j,L)$. The goal of assigning every radio $N_i$ a rainbow associated channel box is translated to coloring the incidences of $G$ so that every two incidences in $\inc{N_i}$ receive distinct colors. This is exactly what we shall do while constructing a 
conflict-free incidence coloring of $G$.

From a theoretical point of view, one may be interested in a fact that the conflict-free incidence coloring relates to 
the \textit{$b$-fold edge-coloring}, which is an assignment of sets of size $b$ to edges of a graph so that adjacent edges receive disjoint sets. An \textit{$(a:b)$-edge-coloring} is a $b$-fold edge coloring out of $a$ available colors. The \textit{$b$-fold chromatic index} $\chi'_b(G)$ is the least integer $a$ such that an $(a:b)$-edge-coloring of $G$ exists. It is not hard to check that $\chi^{c}_i(G)= \chi'_2(G)$ for every graph $G$. However, there are hard problems related to $\chi'_2(G)$, among which the most famous one is the Berge-Fulkerson conjecture \cite{MR294149}, which states that 
every bridgeless cubic graph has a collection of six perfect matchings that together cover every edge exactly twice.  This is equivalent to conjecture that every bridgeless cubic graph $G$ has a $(6:2)$-edge-coloring, i.e., $\chi'_2(G)\leq 6$. This conjecture is still widely open \cite{MR2519156,MR2865657,MR3356176,MR4251536} and was generalized by Seymour \cite{MR532981} to $\gamma$-graphs.

The structure of this paper organizes as follows. In Section \ref{sec:2}, we establish fundamental results for the conflict-free incidence chromatic number of graphs.
In Section  \ref{sec:4}, 
we investigate the conflict-free incidence coloring of outer-1-planar graphs 
by showing that $2\Delta\leq \chi^{c}_i(G)\leq 2\Delta+1$ for outer-1-planar graphs $G$ with maximum degree $\Delta$ unless $G\cong C_3$, and moreover, characterizing  outer-1-planar graphs $G$ with $\chi^{c}_i(G)$ equal to $2\Delta$ or $2\Delta+1$. An efficient algorithm for constructing an optimal conflict-free incidence coloring of a
connected outer-1-planar graph is also given. We end this paper with an interesting open problem relative to the complexity in Section \ref{sec:5}. 

\section{Fundamental results}\label{sec:2}

Let $\chi'(G)$ be the \textit{chromatic index} of $G$, the minimum integer $k$ such that $G$ admits an edge $k$-coloring so that adjacent edges receive distinct colors.
The following is an interesting relationship between $\chi^{c}_i(G)$ and $\chi'(G)$.

\begin{prop} \label{prop:1}
$2\Delta(G)\leq \chi^{c}_i(G)\leq 2\chi'(G)$.
\end{prop}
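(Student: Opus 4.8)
The plan is to establish the two inequalities separately: the lower bound $2\Delta(G)\le\chi^c_i(G)$ by a short counting argument at a vertex of maximum degree, and the upper bound $\chi^c_i(G)\le 2\chi'(G)$ by ``splitting'' each color class of an optimal proper edge coloring into two.

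For the lower bound, fix a vertex $v$ with $d_G(v)=\Delta(G)$ and consider $\inc{v}$. Since the sets $\inc{e}$ over the $\Delta$ edges $e$ incident with $v$ are pairwise disjoint (they differ in the edge coordinate) and each has exactly two elements, $|\inc{v}|=2\Delta$. As any two incidences lying in a common $\inc{w}$ conflict, all $2\Delta$ incidences in $\inc{v}$ pairwise conflict, so they must receive $2\Delta$ distinct colors; hence $\chi^c_i(G)\ge 2\Delta$.

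For the upper bound, put $k=\chi'(G)$, fix a proper edge $k$-coloring $c\colon E(G)\to\{1,\dots,k\}$, and fix an arbitrary linear order on $V(G)$. I would define a coloring $\varphi$ of the incidences with colors $\{1,\dots,2k\}$ by setting, for each edge $e=uv$ with $u<v$, $\varphi(u,e)=c(e)$ and $\varphi(v,e)=k+c(e)$, so that in every case $\varphi(x,e)\in\{c(e),\,k+c(e)\}$. To check conflict-freeness, note first that each of the conflict conditions (i)--(iii) forces the edges $e,f$ of the two incidences $(x,e),(y,f)$ to be equal or adjacent. If $e=f$, the two conflicting incidences are the two incidences of that edge and get the distinct colors $c(e)$ and $k+c(e)$. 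If $e$ and $f$ are adjacent, then $c(e)\ne c(f)$, so $\{c(e),k+c(e)\}\cap\{c(f),k+c(f)\}=\emptyset$, and the two incidences again get distinct colors. Thus $\varphi$ is a conflict-free incidence $2k$-coloring, giving $\chi^c_i(G)\le 2\chi'(G)$.

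I do not expect a real obstacle here: the only point needing attention is the routine verification that conditions (i), (ii), (iii) each entail ``$e=f$ or $e$ and $f$ share a vertex'', after which the case split on ``same edge'' versus ``adjacent edges'' is immediate. Alternatively, one can invoke the identity $\chi^c_i(G)=\chi'_2(G)$ noted in the introduction and simply observe that replacing each of the $k$ colors of the proper edge coloring $c$ by two fresh colors yields a $(2k:2)$-edge-coloring of $G$.
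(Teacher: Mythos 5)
Your proof is correct and follows essentially the same approach as the paper: the lower bound by observing that the $2\Delta$ incidences in $\inc{v}$ at a maximum-degree vertex pairwise conflict, and the upper bound by splitting each class of a proper edge $\chi'(G)$-coloring into the color pair $\{c(e),\,\chi'(G)+c(e)\}$. Your write-up merely spells out the routine verification that the paper leaves implicit.
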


\begin{proof}
Since $|\inc{v}|=2\Delta(G)$ for a vertex $v$ with maximum degree, $\chi^{c}_i(G)\geq 2\Delta(G)$ for every graph $G$. If $\varphi$ is a proper edge coloring of $G$ using the colors $\{1,2,\ldots,\chi'(G)\}$, then one can construct a conflict-free incidence $2\chi'(G)$-coloring of $G$ such that $\varphi(\inc{e})=\{\varphi(e),\varphi(e)+\chi'(G)\}$ for every edge $e\in E(G)$. It follows that $\chi^{c}_i(G)\leq 2\chi'(G)$.
\end{proof}

The well-known Vizing's theorem (see \cite[p128]{Diestel2017}) states that $\Delta(G)\leq \chi'(G)\leq \Delta(G)+1$ for every simple graph $G$. This divides 
simple graphs into two classes. A simple graph $G$ belongs to \textit{class one} if $\chi'(G)=\Delta(G)$, and belongs to class two\textit{} if $\chi'(G)=\Delta(G)+1$. The following are immediate corollaries of Proposition \ref{prop:1}.

\begin{prop} \label{prop:2}
If $G$ is a class one graph, then $\chi^{c}_i(G)=2\Delta(G)$. \hfill$\square$
\end{prop}

\begin{prop} \label{prop:3}
If $G$ is simple graph, then $\chi^{c}_i(G)\in\{2\Delta(G),2\Delta(G)+1,2\Delta(G)+2\}$. \hfill$\square$
\end{prop}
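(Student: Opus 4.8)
The plan is simply to sandwich $\chi^{c}_i(G)$ between two explicit bounds and then observe that an integer trapped in an interval of length $2$ can only take three values. First I would invoke the lower bound of Proposition~\ref{prop:1}, which already gives $\chi^{c}_i(G)\geq 2\Delta(G)$ with no assumption on $G$ beyond being a (finite) graph. For the upper bound I would combine the right-hand inequality of Proposition~\ref{prop:1}, namely $\chi^{c}_i(G)\leq 2\chi'(G)$, with Vizing's theorem; here the hypothesis that $G$ is simple is exactly what licenses the use of Vizing's bound $\chi'(G)\leq\Delta(G)+1$. Chaining these gives $\chi^{c}_i(G)\leq 2\chi'(G)\leq 2(\Delta(G)+1)=2\Delta(G)+2$.

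Putting the two bounds together yields $2\Delta(G)\leq \chi^{c}_i(G)\leq 2\Delta(G)+2$. Since $\chi^{c}_i(G)$ is a positive integer, it must equal one of $2\Delta(G)$, $2\Delta(G)+1$, or $2\Delta(G)+2$, which is the claim. There is essentially no obstacle: the only point worth flagging is that the argument is purely an application of the two previously-established facts, and it is precisely the ambiguity among these three values — and in particular the question of when the value $2\Delta(G)+2$ is actually attained, versus being ruled out for structured graph classes — that motivates the refined analysis of outer-$1$-planar graphs carried out in Section~\ref{sec:4}.
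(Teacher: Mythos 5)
Your proposal is correct and matches the paper's reasoning exactly: the paper also presents this as an immediate corollary of Proposition~\ref{prop:1} combined with Vizing's theorem, sandwiching $\chi^{c}_i(G)$ between $2\Delta(G)$ and $2\chi'(G)\leq 2(\Delta(G)+1)$. Nothing is missing.
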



The well-known K\H{o}nig's theorem (see \cite[p127]{Diestel2017}) states that every bipartite graph is of class 1. So the following is immediate by Proposition \ref{prop:2}.

\begin{thm}
If $G$ is a bipartite graph, then $\chi^{c}_i(G)=2\Delta(G)$. \hfill$\square$
\end{thm}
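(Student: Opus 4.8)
The plan is to obtain the equality by combining the universal lower bound of Proposition~\ref{prop:1} with K\H{o}nig's classical edge-colouring theorem fed through Proposition~\ref{prop:2}.

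First I would record the lower bound: Proposition~\ref{prop:1} already gives $\chi^{c}_i(G)\geq 2\Delta(G)$ for every graph $G$, so nothing special about bipartiteness is needed here.

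For the matching upper bound, the one substantive input is that every bipartite graph is of class one, i.e.\ $\chi'(G)=\Delta(G)$. I would either cite K\H{o}nig's theorem directly, exactly as the surrounding text does, or, if a self-contained argument is preferred, recall its standard proof: embed $G$ into a $\Delta(G)$-regular bipartite multigraph $H$ (pad the two colour classes to equal size with isolated vertices, then add edges greedily between deficient vertices on opposite sides without exceeding degree $\Delta(G)$), observe that every regular bipartite graph satisfies Hall's condition and hence has a perfect matching, and peel off $\Delta(G)$ perfect matchings from $H$; their restrictions to $E(G)$ form a proper $\Delta(G)$-edge-colouring. Either route yields $\chi'(G)=\Delta(G)$, so $G$ is class one. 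Proposition~\ref{prop:2} then gives $\chi^{c}_i(G)=2\Delta(G)$ at once (alternatively, plug $\chi'(G)=\Delta(G)$ into the bound $\chi^{c}_i(G)\leq 2\chi'(G)$ of Proposition~\ref{prop:1} and combine with the lower bound above).

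The proof has no real obstacle beyond K\H{o}nig's theorem itself: the whole statement is essentially a corollary of Proposition~\ref{prop:2}. If one insists on not citing K\H{o}nig, the only step needing care is the reduction of an arbitrary bipartite graph to a regular one while keeping the maximum degree fixed; everything else is immediate.
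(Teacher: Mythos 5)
Your proposal is correct and follows exactly the paper's route: the lower bound from Proposition~\ref{prop:1}, K\H{o}nig's theorem to show bipartite graphs are class one, and Proposition~\ref{prop:2} to conclude. The optional self-contained derivation of K\H{o}nig's theorem is sound but unnecessary, since the paper simply cites it.
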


Now that we have Proposition \ref{prop:2}, it would be worth determining the  conflict-free incidence chromatic number of a certain class of graphs of class two. We first look into  a cycle $C_n$ of length $n$.

\begin{algorithm}[htp]
\BlankLine
\tcc{This algorithm constructs an optimal conflict-free incidence coloring of $C_n$ in linear time.}
\KwIn{The length $n$ of a cycle $C_n$;}
\KwOut{A conflict-free incidence $\chi_i^{c}(C_n)$-coloring $\varphi$ of $C_n$.}
\BlankLine
\tcc{Vertices of $C_n$ are $v_1,v_2,\ldots,v_n$ in this ordering.}
\If{$n=3$\label{li:1}}
{
$\varphiinc{v_1v_2}\gets \{1,2\}$;\\
$\varphiinc{v_2v_3}\gets \{3,4\}$;\\
$\varphiinc{v_3v_1}\gets \{5,6\}$;\\
return;\\
}
$p\gets {\rm the~ quotient~ of~} n~ {\rm divided~ by~ 2}$;\\
$r\gets {\rm the~ remainder~ of~} n~ {\rm divided~ by~ 2}$;\\
\eIf{$r=0$}
{
$v_{2p+1}\gets v_1$;\\
\For{$i=1$ to $2p$}
{
\eIf{$i\equiv 1 \pmod{2}$}
{
$\varphiinc{v_iv_{i+1}}\gets \{1,2\};$\\
}
{$\varphiinc{v_iv_{i+1}}\gets \{3,4\};$\\}
}

}
{
\For{$i=1$ to $2p-2$}
{\eIf{$i\equiv 1 \pmod{2}$}
{$\varphiinc{v_iv_{i+1}}\gets \{1,2\};$\\}
{$\varphiinc{v_iv_{i+1}}\gets \{3,4\};$\\}

}

$\varphiinc{v_{2p-1}v_{2p}}\gets \{1,5\};$\\
$\varphiinc{v_{2p}v_{2p+1}}\gets \{2,3\};$\\
$\varphiinc{v_{2p+1}v_1}\gets \{4,5\};$
}

\caption{\textbf{COLOR-CYCLE}($n$)}
\label{algo:cycle}
\end{algorithm}

\begin{thm} \label{cycle}
\[
   \chi_i^{c}(C_n)= 
   \begin{cases}
        4
        &\text{if } n~is~even,
        \\
        5
        &\text{if } n\geq 5~is~odd,
          \\
        6
        &\text{if } n=3.
        \end{cases}
\]
\end{thm}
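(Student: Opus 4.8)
The plan is to reduce the theorem to a statement about $2$-fold edge-colourings of cycles and then prove matching lower and upper bounds in each of the three cases. Writing $e_i:=v_iv_{i+1}$ (indices mod $n$), a conflict-free incidence colouring of $C_n$ with colour set $S$ is precisely an assignment of a $2$-element set $\varphiinc{e_i}\subseteq S$ to each edge such that $\varphiinc{e_i}\cap\varphiinc{e_{i+1}}=\emptyset$ for every $i$: this is exactly the condition that the four incidences making up $\inc{v_i}$ are pairwise distinct at every vertex $v_i$ (note $d(v_i)=2$, so $|\inc{v_i}|=4$). Thus $\chi^c_i(C_n)$ equals the minimum size of a palette admitting such an assignment; this also matches the known value of the $2$-fold chromatic number of a cycle, but I will argue directly.

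For the lower bounds: since $\Delta(C_n)=2$, Proposition~\ref{prop:1} gives $\chi^c_i(C_n)\ge 4$ for every $n$. If $n=3$ the three edges are pairwise adjacent, so $\varphiinc{e_1},\varphiinc{e_2},\varphiinc{e_3}$ are pairwise disjoint $2$-sets, which needs at least $6$ colours. If $n\ge 5$ is odd, suppose for contradiction that $4$ colours suffice. Inside a $4$-element palette, a $2$-subset is disjoint from exactly one other $2$-subset, namely its complement; hence $\varphiinc{e_{i+1}}$ is forced to equal the complement of $\varphiinc{e_i}$ for all $i$, so $\varphiinc{e_i}$ depends only on the parity of $i$ and in particular $\varphiinc{e_n}=\varphiinc{e_1}$ (as $n$ is odd). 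But $e_n$ and $e_1$ are consecutive, so their sets must be complementary and hence different --- a contradiction. Therefore $\chi^c_i(C_n)\ge 5$ for odd $n\ge 5$.

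For the upper bounds I would verify that Algorithm~\ref{algo:cycle} outputs a valid colouring of the claimed size. For even $n$ the alternating pattern $\{1,2\},\{3,4\},\{1,2\},\dots$ uses $4$ colours and closes up consistently because $n$ is even, so consecutive edges always carry the disjoint pair $\{1,2\},\{3,4\}$ (alternatively, even cycles are bipartite, so this case is immediate from the bipartite theorem stated above). For $n=3$ the assignment $\{1,2\},\{3,4\},\{5,6\}$ is conflict-free and uses $6$ colours. For odd $n=2p+1\ge 5$, edges $e_1,\dots,e_{2p-2}$ alternate between $\{1,2\}$ and $\{3,4\}$, with $e_{2p-2}$ receiving $\{3,4\}$ since $2p-2$ is even; it then remains to check the consecutive pairs $(e_{2p-2},e_{2p-1})$, $(e_{2p-1},e_{2p})$, $(e_{2p},e_{2p+1})$, $(e_{2p+1},e_1)$ against the assigned sets $\{3,4\},\{1,5\},\{2,3\},\{4,5\},\{1,2\}$, each consecutive pair of which is disjoint. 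This uses $5$ colours and covers all odd $n\ge 5$ (when $p=2$ the initial loop simply colours $e_1,e_2$). Combining with the lower bounds gives the stated equalities.

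The only genuinely non-routine step is the odd-cycle lower bound, and even there the whole argument rests on the small observation that within a $4$-colour palette every colour pair has a unique disjoint partner, which propagates a parity obstruction around an odd cycle; the remaining cases and the verification of the algorithm's output are direct checks.
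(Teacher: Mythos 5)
Your proof is correct and follows essentially the same route as the paper: lower bounds from $2\Delta$, pairwise-disjoint edge palettes on $C_3$, and a parity obstruction for odd cycles, together with verification that Algorithm~\ref{algo:cycle} achieves these bounds. The paper merely asserts the lower bounds as ``easily seen,'' whereas you supply the details (in particular the observation that in a $4$-colour palette each $2$-set has a unique disjoint partner, which forces the alternation and the parity contradiction); this is a welcome but not substantively different elaboration.
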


\begin{proof}
One can easily see that $C_n$ admits neither a conflict-free incidence $3$-coloring for any integer $n\geq 3$, and nor
a conflict-free incidence $4$-coloring for any odd $n\geq 3$.
Moreover, $C_3$ does not admit a conflict-free incidence $5$-coloring. Hence Algorithm \ref{algo:cycle} outputs a conflict-free incidence coloring of $C_n$ using the least number of colors in linear time and the result follows.
\end{proof}

We now pay attention to the $n$-order complete graph $K_n$. The famous result of Fiorini and Wilson
\cite{MR0543798} states that $K_n$ is of class 1 provided $n$ is even.
Hence Proposition \ref{prop:2} directly imply the following.

\begin{prop} \label{even-kn}
$\chi_i^{c}(K_{2n})=2\Delta(K_{2n})=4n-2$. \hfill$\square$
\end{prop}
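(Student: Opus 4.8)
The plan is to apply Proposition \ref{prop:2} directly once we know that $K_{2n}$ is of class one. First I would recall the classical theorem of Fiorini and Wilson \cite{MR0543798} that $\chi'(K_{2n}) = 2n-1$, i.e.\ $K_{2n}$ belongs to class one; this is exactly the statement that a complete graph on an even number of vertices decomposes into $2n-1$ perfect matchings, a fact that can be realized concretely by the standard round-robin (``1-factorization'') construction. Since $\Delta(K_{2n}) = 2n-1$, this gives $\chi'(K_{2n}) = \Delta(K_{2n})$, so $K_{2n}$ is a class one graph in the terminology fixed after Proposition \ref{prop:1}.

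Next I would invoke Proposition \ref{prop:2}, which asserts that every class one graph $G$ satisfies $\chi^{c}_i(G) = 2\Delta(G)$. Applying this with $G = K_{2n}$ yields $\chi^{c}_i(K_{2n}) = 2\Delta(K_{2n}) = 2(2n-1) = 4n-2$, which is the claimed equality. Concretely, the conflict-free incidence $(4n-2)$-coloring is obtained as in the proof of Proposition \ref{prop:1}: take a proper edge $(2n-1)$-coloring $\varphi$ of $K_{2n}$ and set $\varphi(\inc{e}) = \{\varphi(e), \varphi(e) + (2n-1)\}$ for each edge $e$, while the lower bound $\chi^{c}_i(K_{2n}) \geq 2\Delta(K_{2n})$ is the trivial counting bound from Proposition \ref{prop:1}.

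There is essentially no obstacle here: the proposition is a one-line consequence of the cited Fiorini--Wilson result combined with Proposition \ref{prop:2}. If one wanted a self-contained argument, the only mildly nontrivial ingredient is the $1$-factorization of $K_{2n}$, but since the excerpt explicitly permits citing that $K_n$ is of class one for even $n$, this step is free. The proof therefore reduces to chaining the two facts together and computing $2(2n-1) = 4n-2$.
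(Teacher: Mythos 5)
Your proposal is correct and follows exactly the paper's route: the paper also derives this proposition immediately from the Fiorini--Wilson result that $K_n$ is of class one for even $n$, combined with Proposition \ref{prop:2}. Nothing further is needed.
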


Fiorini and Wilson \cite{MR0543798} also showed that
$K_n$ is of class 2 provided $n$ is odd, and thus Proposition \ref{prop:2} cannot be applied to such a $K_n$. Nevertheless, we can  determine the
conflict-free incidence chromatic number of $K_n$ with $n$ being odd from another view of point.

\begin{prop} \label{odd-kn}
If $G$ is the graph derived from $K_{2n+1}$ 
by removing less than $n/2$ edges, then 
$\chi_i^{c}(G)=2\Delta(G)+2=4n+2$. 
\end{prop}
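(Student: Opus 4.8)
The plan is to sandwich $\chi_i^c(G)$ between $4n+2$ from below and from above, the upper bound being essentially immediate and the lower bound coming from a counting argument on matchings.

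For the upper bound I would first check that $\Delta(G)=2n$. The edges deleted from $K_{2n+1}$ number fewer than $n/2$, hence are incident with fewer than $n$ vertices, so at least one of the $2n+1$ vertices is incident with no deleted edge and still has degree $2n$ in $G$; since also $G\subseteq K_{2n+1}$, we get $\Delta(G)=2n$. As $G$ is simple, Proposition~\ref{prop:3} then gives $\chi_i^c(G)\le 2\Delta(G)+2=4n+2$ at once.

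The substance is the lower bound $\chi_i^c(G)\ge 4n+2$. Let $\varphi$ be a conflict-free incidence $k$-coloring of $G$, and for a color $c$ put $M_c=\{e\in E(G):c\in\varphiinc{e}\}$. I claim each $M_c$ is a matching. If $e=xw$ and $f=yw$ were distinct edges of $M_c$ sharing the vertex $w$, then the four incidences $(x,e),(w,e),(y,f),(w,f)$ would be pairwise conflicting (one checks each of the six pairs against rules (i)--(iii), always with $w$ playing the role of the common vertex), hence would receive four distinct colors, so $\varphiinc{e}\cap\varphiinc{f}=\emptyset$, contradicting $c\in\varphiinc{e}\cap\varphiinc{f}$; this is exactly the identity $\chi_i^c=\chi'_2$ remarked on in Section~\ref{sec:1}. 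Since $|V(G)|=2n+1$, every matching of $G$ has at most $n$ edges, so counting in two ways the pairs $(c,e)$ with $c\in\varphiinc{e}$,
\[
2|E(G)|=\sum_{c=1}^{k}|M_c|\le kn.
\]
With $|E(G)|=\binom{2n+1}{2}-m=n(2n+1)-m$ and $m<n/2$ this forces $k\ge 4n+2-\tfrac{2m}{n}>4n+1$, i.e. $k\ge 4n+2$.

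Combining the two bounds yields $\chi_i^c(G)=4n+2=2\Delta(G)+2$, as claimed. I do not anticipate a real obstacle: the only points needing a little care are the elementary estimate $\Delta(G)=2n$ and the verification that every color class is a matching, after which the hypothesis $m<n/2$ is precisely the threshold making $2|E(G)|>(4n+1)n$ and thereby forcing the extra color.
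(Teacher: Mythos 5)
Your proof is correct and rests on the same core idea as the paper's: each color can occur on at most $n$ incidences because the edges carrying a given color form a matching in a graph on $2n+1$ vertices, and then a count of incidences against $m<n/2$ forces the $(4n+2)$nd color. The paper packages this as a pigeonhole contradiction (a color used $n+1$ times would need $2n+2$ distinct vertices) rather than your direct double-count over color classes $M_c$, but the argument is essentially identical, and your explicit verification that $\Delta(G)=2n$ is a small point the paper leaves implicit.
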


\begin{proof}
We first show that $\chi_i^{c}(G)\ge 4n+2$. 
Suppose for a contradiction that $\varphi$ is a conflict-free incidence $(4n+1)$-coloring 
of $G$.
Since $G$ totally has more than $4n^2+2n-n=(4n+1)n$ 
incidences, there is a color of $\varphi$, say $1$, that has been used at least $n+1$ times.
Since every two strong incidences of a vertex are differently colored, there are $n+1$ vertices of $G$, say $v_1,v_2,\ldots,v_{n+1}$, such that for each $1\leq i\leq n+1$, $\varphi(v_i,v_iu_i)=1$, where $u_i$ is one neighbor of $v_i$. 
Since every two weak incidences of a vertex are also differently colored, each $u_i$ is different from every $u_j$ with $j\neq i$. If $u_i$ coincides with some $v_j$ with $j\neq i$, then $\varphi(v_i,v_iu_i)=\varphi(u_i,u_iu_j)$, a contradiction as $(v_i,v_iu_i)$ conflicts $(u_i,u_iu_j)$.  Hence 
each $u_i$ is different from every $v_j$ with $j\neq i$. It follows that 
$V(G)\supseteq \bigcup_{i=1}^{n+1}\{u_i,v_i\}$ and thus $|V(G)|\geq 2n+2$, a contradiction.
To show the equality, we apply proposition \ref{prop:3} to $G$. It follows that $\chi_i^{c}(G)\le 2\Delta(G)+2=4n+2$, as desired.
\end{proof}

Combining Propositions \ref{even-kn} and \ref{odd-kn} together, we conclude the following.

\begin{thm} \label{complete graph}
\[
   \chi_i^{c}(K_n)= 
   \begin{cases}
        2n-2
        &\text{if } n~is~even,
        \\
        2n
        &\text{if } n~is~odd.
        \end{cases}
\]
\end{thm}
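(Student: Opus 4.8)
The plan is to prove the two cases separately, each being an immediate instance of one of the two preceding propositions together with the Fiorini--Wilson classification already recalled, so the whole argument is essentially a bookkeeping repackaging.

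First I would dispose of the even case. Writing $n=2m$, the complete graph $K_n$ has maximum degree $\Delta(K_n)=n-1=2m-1$. Proposition~\ref{even-kn} asserts $\chi_i^{c}(K_{2m})=2\Delta(K_{2m})=4m-2$, and since $4m-2=2(2m)-2=2n-2$, this is exactly the claimed value. Conceptually this case rests only on $K_{2m}$ being of class one, hence on Proposition~\ref{prop:2}.

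Next I would handle the odd case. Write $n=2m+1$; for $n\ge 3$ we have $m\ge 1$, so $0<m/2$, which means $K_n=K_{2m+1}$ is obtained from $K_{2m+1}$ by deleting fewer than $m/2$ edges (namely, none of them). Proposition~\ref{odd-kn} then applies verbatim, with the ``$n$'' appearing there equal to our $m$, yielding $\chi_i^{c}(K_n)=2\Delta(K_n)+2=4m+2=2(2m+1)=2n$, as required. The degenerate case $n=1$, where $K_1$ carries no incidences, is vacuous and may simply be excluded from the statement.

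I do not expect a genuine obstacle here: the theorem is a corollary that merely assembles Propositions~\ref{even-kn} and~\ref{odd-kn}. All the substantive work has already been done elsewhere --- the upper bound $2\Delta+2$ for odd $n$ comes from the generic bound in Proposition~\ref{prop:3}, the matching lower bound from the pigeonhole/counting argument in the proof of Proposition~\ref{odd-kn}, and the even case from the class-one membership of $K_{2m}$. The only points requiring care are the translation of indices (our $n$ versus the parameter $n$ occurring inside Propositions~\ref{even-kn} and~\ref{odd-kn}) and the verification that the hypothesis ``fewer than $n/2$ edges removed'' in Proposition~\ref{odd-kn} is met by $K_n$ itself as soon as $n\ge 3$.
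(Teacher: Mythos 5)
Your proposal is correct and matches the paper exactly: the even case is Proposition~\ref{even-kn} and the odd case is Proposition~\ref{odd-kn} applied with zero edges removed, which is precisely how the paper combines the two propositions. The index translation you spell out is the only bookkeeping involved, and you handle it correctly.
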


We use the polygon method to construct an optimal conflict-free incidence coloring of $K_n$ by Algorithm \ref{Kn}. 
To analyze the complexity of the algorithm, we need look into its lines \ref{line4} and \ref{line11}. 
If $n$ is even, then for each $1\leq i\leq n-1$, 
$E_i=\{v_{i-j}v_{i+j}|j=1,\dots,\frac{n-2}{2}\}\cup \{v_iv_n\}$ by line \ref{line4}, where the subscripts are taken module $n$ and $v_0$ is recognized as $v_{n-1}$.
If $n$ is odd, then for each $1\leq i\leq n$,  $E_i=\{v_{i-j}v_{i+j+1}|j=0,1,\dots,\frac{n-3}{2}\}$ according to line \ref{line11}, where the subscripts are taken module $n$ and $v_0$ is recognized as $v_{n}$. It follows that the complexity of Algorithm \ref{Kn} is $O((n-1)n/2)=O(n^2)$.

\begin{algorithm}[htp]\label{Kn}
\BlankLine
\tcc{This algorithm constructs an optimal conflict-free incidence coloring of $K_n$ in quadratic time}
\KwIn{The order $n$ of a complete graph $K_n$;}
\KwOut{A conflict-free incidence $\chi_i^{c}(K_n)$-coloring $\varphi$ of $K_n$.}
\BlankLine
\tcc{Vertices of $K_n$ are $v_1,v_2,\ldots,v_n$.}

\eIf{$n\equiv 0 \pmod{2}$}
{$G\gets $ an $(n -1)$-sided regular polygon formed by placing $v_1,v_2,\ldots,v_{n-1}$ on a circle, with $v_n$ at the center of the circle, and connecting every pair of vertices by straight line;\\
\tcc{$G$ now is a special drawing of $K_n$ in the plane.}
\For{$i=1$ to $n-1$}
{
$E_i \gets$ the set of all edges that lie on lines perpendicular to $v_iv_n$ in $G$ along with the edge $v_iv_n$ itself;\label{line4}\\
\For{each edge $e\in E_i$}
{$\varphiinc{e}\gets \{2i-1,2i\}$;\\}
}
}
{$G\gets $ an $n$-sided regular polygon formed by placing $v_1,v_2,\ldots,v_{n}$ on a circle and connecting every pair of vertices by straight line;\\
$v_{n+1}\gets v_1$;\\
\For{$i=1$ to $n$}
{
$E_i \gets$ the set of all edges that lie on lines parallel to $v_iv_{i+1}$ in $G$ along with the edge $v_iv_{i+1}$ itself;\label{line11}\\
\For{each edge $e\in E_i$}
{$\varphiinc{e}\gets \{2i-1,2i\}$;\\}
}
}
\caption{\textbf{COLOR-COMPLETE-GRAPH}($n$)}
\end{algorithm}

\section{Outer-1-planar graphs}\label{sec:4}

In this section we determine the conflict-free incidence chromatic numbers of outer-$1$-planar graphs, a subclass of planar partial $3$-trees \cite{MR3162015}, which serve many applications ranging from network reliability to machine learning.
Formally speaking, a graph is \textit{outer-1-planar} if it can be drawn in the plane so that vertices are on the outer-boundary and each edge is crossed at most once.
The notion of outer-1-planarity was first introduced by Eggleton \cite{MR846198} and outer-1-planar graphs are also known as
\emph{outerplanar graphs with edge crossing number one} \cite{MR846198} and \emph{pseudo-outerplanar graphs} \cites{MR2945171,MR3084275,MR3203677}.
The coloring of outer-1-planar graphs were investigated by many authors including \cites{MR2945171,MR3442550,MR3084275,MR3203677,MR4338069,MR4218010,MR4144341,MR3704822,MR3967165,QZ22,zbMATH07367782}.

The most popular result on the edge coloring of planar graphs is that planar graphs with maximum degree at least 7 is of class one \cite{MR1804346,MR1866396}.
Since there exist class two planar graphs with maximum degree $\Delta$ for each $\Delta\leq 5$, the remaining problem is to determine whether every planar graph with maximum degree 6 is of class one, and this is still quite open (see survey \cite{MR3898375}). Therefore, investigating the edge coloring of subclasses of planar graphs is natural and interesting.
Fiorini \cite{MR366724} showed that every outerplanar graph is of class one if and only if it is not an odd cycle, and this conclusion had  been generalized to the class of series-parallel graphs by Juvan, Mohar, and Thomas \cite{MR1728012}. Zhang, Liu, and Wu \cite{MR2945171} showed that outer-1-planar graphs with maximum degree at least 4 are of class one. The chromatic indexes of outer-1-planar graphs with maximum degree at most 3 was completely determined by Zhang \cite{MR3442550}. 

\begin{figure}
    \centering
    \includegraphics[width=14cm]{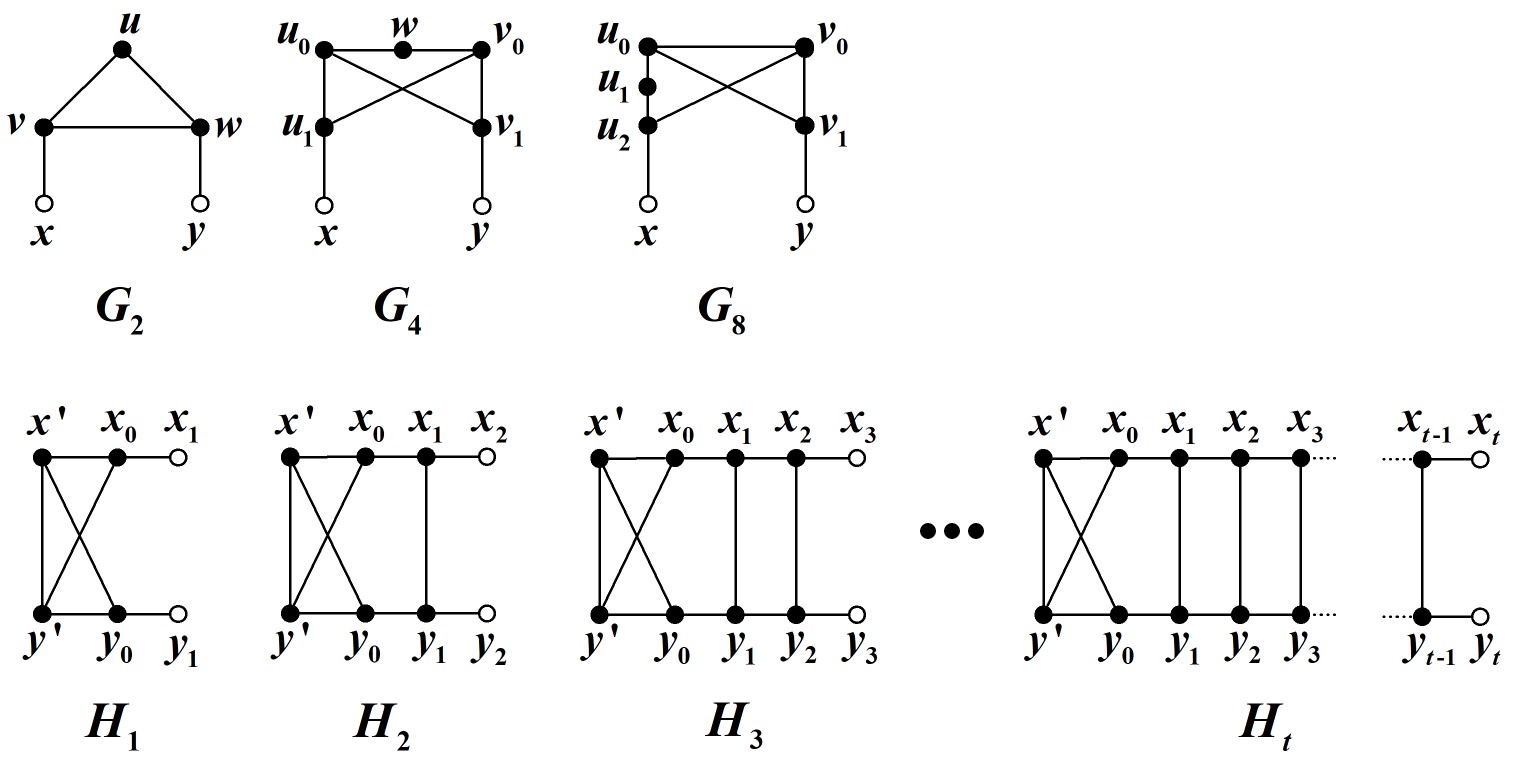}
    \caption{The configurations $G_2,G_4,G_8$ and $H_t$}
    \label{fig:operation1}
\end{figure}

We restate Zhang's definition \cite{MR3442550} as follows. Let $G_2,G_4,G_8,$ and $H_t$ be configurations defined by Figure \ref{fig:operation1}. For any solid vertex $v$ of a configuration and any graph $G$ containing such a configuration, the degree of $v$ in $G$ is exactly the number of edges that are incident with $v$ in the picture.

A graph belongs to the class $\mathcal{P}$, if it is isomorphic to $K_4^+$ (equal to $K_4$ with one edge subdivided) or derived from a graph $G\in \mathcal{P}$ by one of the following operations:
\begin{description}
  \item[$\boldsymbol{G\sqcup_z G_t}$ with $\boldsymbol{t=2,4,8}$] \label{o-1}
  remove a vertex $z$ of degree two from $G$, and then
  paste a copy of $G_2$, or $G_4$, or $G_8$ on the current graph accordingly, by identifying $x$ and $y$ with $z_1$ and $z_2$, respectively, where $z_1$ and $z_2$ are the neighbors of $z$ (see Figure \ref{fig:operation3} for an example);
  \begin{figure}[htp]
    \centering
    \includegraphics[width=6cm]{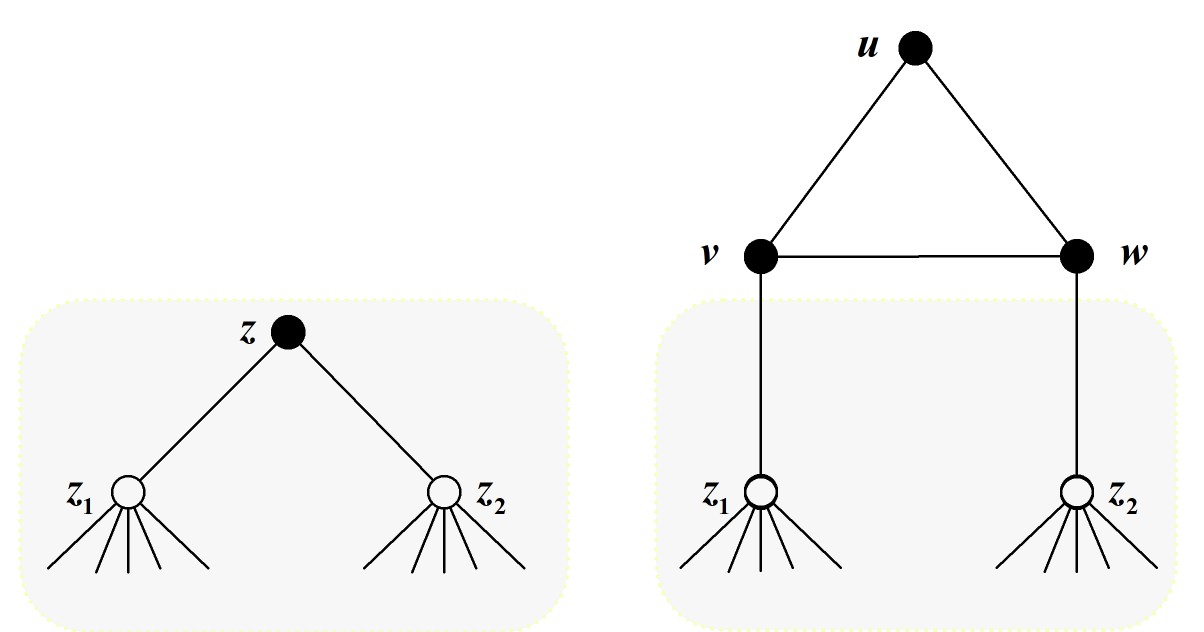}
    \caption{The graph on the left shows $G$ and the one on the right shows $G\sqcup_z G_2$}
    \label{fig:operation3}
\end{figure}
  \item[$\boldsymbol{G\vee_{z_1z_2} H_t}$ with $\boldsymbol{t\geq 1}$] \label{o-2}
  remove an edge $z_1z_2$ from $G$, and then paste a copy of $H_t$ on the current graph by identifying $x_t$ and $y_t$ with $z_1$ and $z_2$, respectively  (see Figure \ref{fig:operation4} for an example).
  \begin{figure}[htp]
    \centering
    \includegraphics[width=6cm]{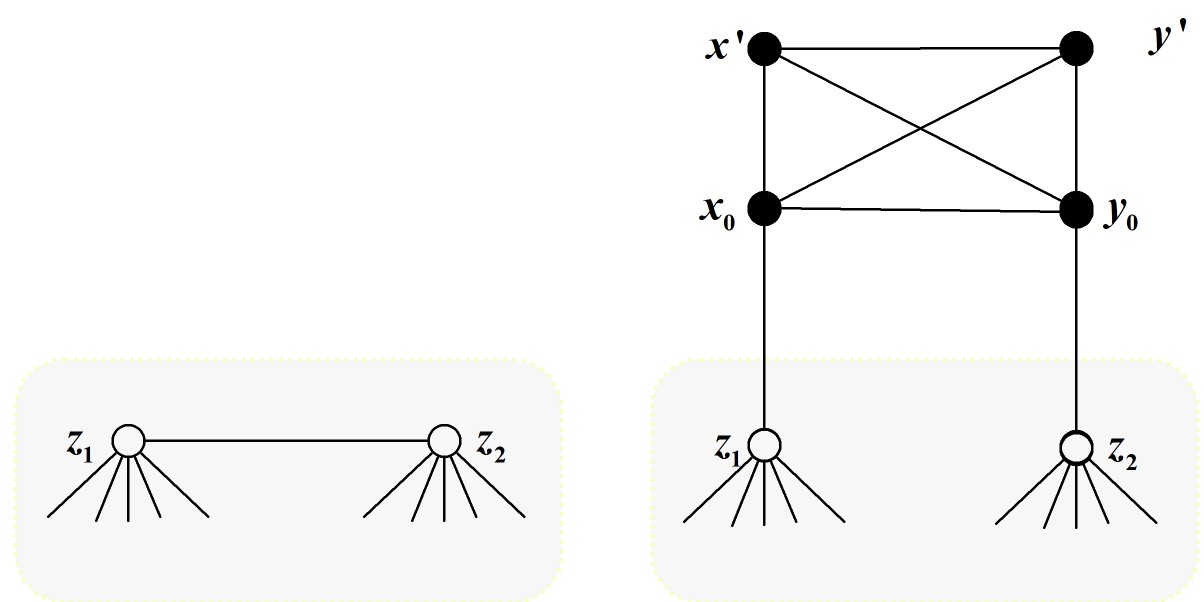}
    \caption{The graph on the left shows $G$ and the one on the right shows $G\vee_{z_1z_2}H_1$}
    \label{fig:operation4}
\end{figure}
\end{description}

Let $\mathcal{P}^+$ be the class of connected outer-1-planar graphs with maximum degree 3 that contains some graph in $\mathcal{P}$ as a subgraph.
Now we summarize the result of Zhang \cite{MR3442550} and Zhang, Liu, and Wu \cite{MR2945171} as follows. 

\begin{thm} \label{edge-col}
\[
   \chi'(G)= 
   \begin{cases}
        \Delta(G)
        &\text{if } G\not\in \mathcal{P}^+~and~ G ~is ~not ~an ~odd ~cycle,
        \\
        \Delta(G)+1
        &\text{otherwise},
        \end{cases}
\]if $G$ is a connected outer-$1$-planar graph. \hfill$\square$
\end{thm}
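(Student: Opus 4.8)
The plan is to deduce the statement from the two results quoted just above --- the theorem of Zhang, Liu, and Wu \cite{MR2945171}, that outer-$1$-planar graphs of maximum degree at least $4$ are of class one, and the theorem of Zhang \cite{MR3442550} on the chromatic index of connected outer-$1$-planar graphs of maximum degree at most $3$ --- and then to verify that the case split in the displayed formula is consistent with them. So I would organize the argument by the value of $\Delta := \Delta(G)$, treating $\Delta \ge 4$, $\Delta \le 2$, and $\Delta = 3$ separately, with all the real content concentrated in the last case.

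For $\Delta \ge 4$: by \cite{MR2945171}, $G$ is of class one, so $\chi'(G) = \Delta$. This agrees with the first branch of the formula because the second branch is vacuous in this range: an odd cycle has maximum degree $2$, while every member of $\mathcal{P}^+$ has maximum degree $3$ by definition, so $G$ is neither an odd cycle nor a member of $\mathcal{P}^+$. For $\Delta \le 2$: a connected graph of maximum degree at most $2$ is a path or a cycle. Paths and even cycles are bipartite, so $\chi'(G) = \Delta$ by K\H{o}nig's theorem, and they lie in the first branch of the formula since $\mathcal{P}^+$ contains no graph of maximum degree below $3$; odd cycles satisfy $\chi'(G) = 3 = \Delta+1$ and fall into the second branch as ``odd cycles''. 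Hence the formula is correct here as well (the degenerate graphs $K_1$ and $K_2$ being trivial).

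The only substantial case is $\Delta = 3$, which is precisely the theorem of Zhang \cite{MR3442550}: a connected outer-$1$-planar graph of maximum degree $3$ has $\chi'(G) = 4$ if it lies in $\mathcal{P}^+$ and $\chi'(G) = 3$ otherwise --- and since an odd cycle has maximum degree $2$, the ``odd cycle'' alternative is inactive here, so the formula reduces to this dichotomy. I would prove the ``class two'' direction by a matching-size obstruction at the base together with an induction along the generating operations: $K_4^+$ needs four colours because it has $7$ edges on $5$ vertices while every matching has at most $2$ edges, and one then shows that pasting $G_2$, $G_4$, or $G_8$ at a degree-two vertex, or $H_t$ across an edge, forces in any proper $3$-edge-colouring a colour coincidence that propagates to the two identification vertices, so class-two-ness is inherited; throughout, the operations keep the graph outer-$1$-planar, which is what makes the inclusion of $\mathcal{P}$ into $\mathcal{P}^+$ meaningful.

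For the converse I would take a minimum counterexample $G$ --- connected outer-$1$-planar, $\Delta = 3$, $G \notin \mathcal{P}^+$, yet $\chi'(G) = 4$ --- use the outer-$1$-planar structure to extract an unavoidable local configuration, and argue that it is either reducible by a Kempe-chain recolouring, contradicting minimality, or is one of $G_2, G_4, G_8, H_t$, in which case $G$ arises from a strictly smaller outer-$1$-planar graph by one of the two operations and therefore $G \in \mathcal{P}^+$, again a contradiction. The genuinely hard part --- and the reason this case is cited rather than reproved --- is the structural lemma that this configuration set is unavoidable among outer-$1$-planar graphs of maximum degree $3$, together with the bookkeeping that each operation simultaneously preserves outer-$1$-planarity and strictly forces the extra colour; this is the technical heart of \cite{MR3442550}, which I would invoke directly.
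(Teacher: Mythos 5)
Your case split on $\Delta$ and the way you dispose of $\Delta\ge 4$ (via \cite{MR2945171}) and $\Delta\le 2$ (paths and even cycles are class one, odd cycles class two, and neither lies in $\mathcal{P}^+$) is exactly what the paper does; the theorem really is a combination of the two cited results, and your reduction of everything to the $\Delta=3$ dichotomy is the intended argument. The one substantive point you miss is that the $\Delta=3$ statement you invoke is \emph{not} ``precisely the theorem of Zhang \cite{MR3442550}'': Zhang's published theorem asserts the dichotomy with the class $\mathcal{P}$, not $\mathcal{P}^+$, and the paper's entire ``proof'' of Theorem 3.1 is a remark pointing out that Zhang's statement is incorrect as published and correcting it. The correction has two halves, both of which your sketch only partially covers: (i) Zhang's induction shows every graph in $\mathcal{P}$ is class two, and one must then \emph{lift} this to all of $\mathcal{P}^+$ by the (one-line, but necessary) observation that if $H\subseteq G$ with $\Delta(H)=\Delta(G)=3$ and $\chi'(H)=4$ then $\chi'(G)\ge 4$ --- your induction along the generating operations stops at $\mathcal{P}$ and never performs this lift; (ii) for the converse, the minimal-counterexample argument must be run against the hypothesis $G\notin\mathcal{P}^+$ rather than $G\notin\mathcal{P}$, and the reason Zhang's original discharging/Kempe machinery still applies is that a minimal counterexample to the corrected statement is $2$-connected. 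Neither point is hard, but both are the actual content of the paper's proof of this theorem, so a version of your argument that cites \cite{MR3442550} verbatim for the $\mathcal{P}^+$ dichotomy would be citing a statement that does not appear there (and whose $\mathcal{P}$-version is false).
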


\noindent \textbf{Remark on Theorem \ref{edge-col}:} \textit{Zhang \cite{MR3442550} claimed that every connected outer-1-planar graph with maximum degree 3 is of class one if and only if $G\not\in \mathcal{P}$. However, this statement is incorrect. Indeed, Zhang showed that every graph in  $\mathcal{P}$ is of class two. This further implies that every outer-1-planar graph with maximum degree 3 that contains some graph in $\mathcal{P}$ is of class two. In other words, every graph in  $\mathcal{P}^+$ is of class two. Using the same proof of Theorem 3.3 in \cite{MR3442550}, one can show that if $G$ is a connected outer-1-planar graph with maximum degree 3  not in $\mathcal{P}^+$ then it is of class one (note that the minimal counterexample to this statement is 2-connected and thus Zhang's original proof works now).
Conclusively, every connected outer-1-planar graph with maximum degree 3 is of class one if and only if $G\not\in \mathcal{P}^+$. Combining this with the result of Zhang, Liu, and Wu \cite{MR2945171} that every outer-1-planar graph with maximum degree at least 4 is of class one, we have Theorem \ref{edge-col}.}

The following is an immediate corollary of Theorem \ref{edge-col} and Proposition \ref{prop:1}.

\begin{thm}\label{thm:notp}
If $G$ is a connected outer-1-planar graph such that $G\not\in \mathcal{P}^+$ and $G$ is not an odd cycle, then $\chi_i^{c}(G)=2\Delta(G)$. \hfill$\square$
\end{thm}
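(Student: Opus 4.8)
The statement to prove is Theorem~\ref{thm:notp}: if $G$ is a connected outer-1-planar graph with $G\not\in\mathcal{P}^+$ and $G$ is not an odd cycle, then $\chi_i^{c}(G)=2\Delta(G)$.

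\medskip

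\textbf{Proof proposal.} The plan is to simply chain together the two ingredients that have just been assembled in the excerpt, namely Theorem~\ref{edge-col} and Proposition~\ref{prop:1} (via its corollary Proposition~\ref{prop:2}). First I would invoke Theorem~\ref{edge-col}: since $G$ is a connected outer-1-planar graph with $G\not\in\mathcal{P}^+$ and $G$ is not an odd cycle, the theorem gives $\chi'(G)=\Delta(G)$, i.e.\ $G$ is of class one. Then I would apply Proposition~\ref{prop:2}, which asserts that every class one graph $G$ satisfies $\chi_i^{c}(G)=2\Delta(G)$; equivalently, one combines the lower bound $\chi_i^{c}(G)\geq 2\Delta(G)$ from Proposition~\ref{prop:1} with the upper bound $\chi_i^{c}(G)\leq 2\chi'(G)=2\Delta(G)$ obtained by substituting $\chi'(G)=\Delta(G)$ into the right inequality of Proposition~\ref{prop:1}. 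This forces $\chi_i^{c}(G)=2\Delta(G)$, which is the claim.

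\medskip

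Since everything reduces to results already established, there is essentially no obstacle: the only point requiring care is checking that the hypotheses of Theorem~\ref{edge-col} are exactly met — that the graph is connected, outer-1-planar, avoids $\mathcal{P}^+$, and is not an odd cycle — which are precisely the hypotheses assumed here. If one wished to be fully self-contained, one could also note that the construction in the proof of Proposition~\ref{prop:1} (take a proper edge $\Delta$-coloring $\varphi$, then set $\varphi(\mathrm{Inc}(e))=\{\varphi(e),\varphi(e)+\Delta\}$ for each edge $e$) yields an explicit optimal conflict-free incidence coloring, which also explains how one would algorithmically realize the coloring for this family; this is the template that the general algorithm for connected outer-1-planar graphs, promised later, will extend to the remaining cases where $G\in\mathcal{P}^+$ or $G$ is an odd cycle.
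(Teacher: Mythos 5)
Your proposal is correct and is exactly the paper's argument: the paper states Theorem~\ref{thm:notp} as an immediate corollary of Theorem~\ref{edge-col} (which gives $\chi'(G)=\Delta(G)$ under these hypotheses) combined with Proposition~\ref{prop:1} (equivalently Proposition~\ref{prop:2}). Nothing further is needed.
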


The next goal of this section is to prove $\chi_i^{c}(G)=2\Delta(G)+1$ if $G\in \mathcal{P}^+$ or $G$ is an odd cycle unless $G\cong C_3$. Theorem \ref{cycle} supposes this conclusion while 
$G$ is an odd cycle of length at least 5. Hence in the following we assume that $G\in \mathcal{P}^+$. 
Note that  $K_4^{+}$  is the smallest graph  (in terms of the order) in $\mathcal{P}^+$.
Now we prove $\chi_i^{c}(G)=7$ for every graph $G\in \mathcal{P}^+$
by a series of lemmas.

\begin{lem} \label{k4}
$\chi_i^{c}(K_4^{+})=7$.
\end{lem}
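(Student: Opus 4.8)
The plan is to establish $\chi_i^{c}(K_4^+)=7$ by proving the two matching bounds $\chi_i^{c}(K_4^+)\ge 7$ and $\chi_i^{c}(K_4^+)\le 7$. Since $\Delta(K_4^+)=3$, Proposition~\ref{prop:3} already gives $\chi_i^{c}(K_4^+)\le 8$ and $\chi_i^{c}(K_4^+)\ge 6$, so the whole content of the lemma is to rule out $6$ colors and to exhibit a valid $7$-coloring. The upper bound is the easy half: I would simply produce an explicit conflict-free incidence $7$-coloring of $K_4^+$. Label $K_4^+$ as the $4$-cycle $v_1v_2v_3v_4$ with both diagonals present in $K_4$, one of which, say $v_2v_4$, is subdivided by a new vertex $w$; then write down $\varphi(\inc{e})$ for each of the $6$ edges using colors from $\{1,\dots,7\}$ and check that conflicting incidences (those sharing a common vertex $w'$, i.e. lying in some $\inc{w'}$) receive distinct colors. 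Equivalently, since $\chi_i^{c}=\chi'_2$, I only need a $(7:2)$-edge-coloring of $K_4^+$, which is routine to find by hand; the degree-$2$ vertex $w$ and the degree-$2$ vertex (whichever of $v_1,v_3$ is not on the original diagonal $v_1v_3$) give plenty of slack.

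The lower bound $\chi_i^{c}(K_4^+)\ge 7$ is the real work, and it amounts to showing $K_4^+$ has no $(6:2)$-edge-coloring. I would argue by contradiction: suppose $\varphi$ is a conflict-free incidence $6$-coloring, i.e. an assignment of $2$-element color sets $\varphi(\inc{e})\subseteq\{1,\dots,6\}$ to the $6$ edges such that edges sharing a vertex get disjoint sets. Focus on the triangle $T=v_1v_2v_3$ (using that $v_1v_3$ is the unsubdivided diagonal, so $v_1v_2v_3$ is a triangle in $K_4^+$): its three edges are pairwise adjacent, so their color sets are pairwise disjoint $2$-subsets of a $6$-set, hence they partition $\{1,\dots,6\}$ exactly. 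The vertex $v_2$ has degree $3$ (neighbors $v_1,v_3,w$), and the edge $v_2w$ must avoid the $4$ colors already used on $v_2v_1$ and $v_2v_3$, so $\varphi(\inc{v_2w})$ is forced to equal the color set of $v_1v_3$. Similarly $v_4$ has degree $3$ with neighbors $v_1,v_3,w$: the edges $v_4v_1,v_4v_3,v_4w$ are pairwise adjacent, so again their color sets partition $\{1,\dots,6\}$, and now $v_4v_1$ is adjacent to $v_2v_1$, $v_4v_3$ is adjacent to $v_2v_3$, and $v_4w$ is adjacent to $v_2w$. Tracking these disjointness constraints: $\varphi(\inc{v_4v_1})$ must avoid $\varphi(\inc{v_1v_2})$ and avoid $\varphi(\inc{v_1v_3})$ (the latter because $v_1v_3$ meets $v_1v_4$ at $v_1$) — but those two sets already cover $4$ colors, forcing $\varphi(\inc{v_4v_1})=\varphi(\inc{v_2v_3})$; symmetrically $\varphi(\inc{v_4v_3})=\varphi(\inc{v_1v_2})$. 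Then $\varphi(\inc{v_4w})$ is forced to be the remaining set $\varphi(\inc{v_1v_3})=\varphi(\inc{v_2w})$, contradicting that $v_2w$ and $v_4w$ are adjacent at $w$ and so must get disjoint sets.

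I expect the main obstacle to be bookkeeping rather than conceptual difficulty: one must be careful about which of $v_1,v_3$ is the endpoint of the unsubdivided diagonal and keep the adjacency relations among the six edges straight, since a mislabeling can make the forcing chain collapse prematurely or, worse, appear consistent. A clean way to present this is to first observe the general principle that in any graph, if a vertex $u$ of degree $3$ lies on a triangle $uab$ with $ab$ also present, then under a $6$-coloring the three edges at $u$ partition $\{1,\dots,6\}$ and the "opposite" edge at the third triangle vertex is pinned; applying this to both degree-$3$ vertices $v_2$ and $v_4$ of $K_4^+$ (which share the two neighbors $v_1,v_3$ on the triangle and the common neighbor $w$) yields the contradiction at $w$ immediately. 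Then the explicit $7$-coloring from the first paragraph closes the argument. I would double-check the $7$-coloring by directly verifying the disjointness condition at each of the five vertices $v_1,v_2,v_3,v_4,w$.
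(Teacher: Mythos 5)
Your proof is correct, but your lower-bound argument takes a genuinely different route from the paper's. The paper argues by counting: $K_4^+$ has $14$ incidences, so under a $6$-coloring some color appears on at least three pairwise non-conflicting incidences; non-conflicting incidences lie on non-adjacent edges, so this forces a matching of size $3$ and hence $6$ distinct vertices, contradicting $|V(K_4^+)|=5$. You instead exploit the equivalence $\chi_i^c=\chi'_2$ and run a forcing chain: the triangle $v_1v_2v_3$ partitions the $6$ colors into three $2$-sets, the degree-$3$ condition at $v_1,v_2,v_3$ pins down $\varphiinc{v_1v_4}$, $\varphiinc{v_3v_4}$, and $\varphiinc{v_2w}$, and then the partition at $v_4$ forces $\varphiinc{v_4w}=\varphiinc{v_2w}$, contradicting adjacency at $w$. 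Both are valid; the paper's pigeonhole argument is shorter and reusable (it is the same device used for $K_{2n+1}$ in Proposition \ref{odd-kn}), while your structural forcing is in the spirit of the paper's Lemmas \ref{l2}--\ref{l5} and gives more information about why the coloring fails. Two small points: your aside that one of $v_1,v_3$ has degree $2$ is wrong --- subdividing $v_2v_4$ leaves all four original $K_4$-vertices with degree $3$, and $w$ is the unique degree-$2$ vertex --- but this occurs only in a heuristic remark about slack and does not affect the argument; and for the upper bound you should actually write out the $7$-coloring (one exists, as in the paper's Figure \ref{fig:operation2}) rather than assert it is routine.
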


\begin{proof}
Figure \ref{fig:operation2} shows a conflict-free incidence 7-colorable of $K_4^+$, so it is sufficient to show that 6 colors are not enough to create a conflict-free incidence coloring of $K_4^+$.
\begin{figure}[htp]
    \centering
    \includegraphics[width=3cm]{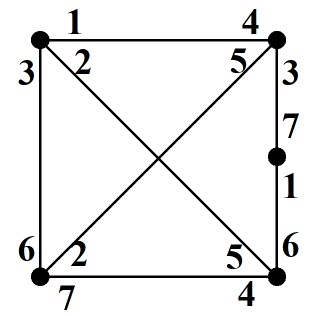}
    \caption{A conflict-free incidence 7-colorable of $K_4^+$}
    \label{fig:operation2}
\end{figure}

 Suppose for a contradiction that $\varphi$ is a conflict-free incidence 6-coloring of $K_4^+$.
 Since $K_4^{+}$ has $7$ edges and $14$ incidences, there is a color, say $1$, such that
 $\varphi(x_1,x_1x_1')=\varphi(x_2,x_2x'_2)=\varphi(x_3,x_3x'_3)=1$. If $x_i=x_j$ or $x'_i=x'_j$ or $x_i=x'_j$ for some $1\leq i<j\leq 3$, then $(x_i,x_ix'_i)$ and $(x_j,x_jx'_j)$ are conflicting and thus they cannot in a same color.
 Hence $|\{x_1,x_2,x_3,x'_1,x'_2,x'_3\}|=6$, contradicting the fact that $|K^+_4|=5$.
 \end{proof}

From now on, if we say coloring a graph or a configuration we mean coloring its incidences so that every two conflicting ones receive distinct colors.

\begin{lem}\label{l2}
If the configuration $G_2$ is colored with $6$ colors under $\varphi$, then  $\varphiinc{vx}\cap  \varphiinc{wy}=\emptyset$.
\end{lem}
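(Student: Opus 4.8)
The plan is to translate the statement into the language of fractional edge colorings and then exploit the rigidity of the triangle inside $G_2$. For a conflict-free incidence coloring $\varphi$ and two \emph{adjacent} edges $e=uw$ and $f=vw$, all four incidences $(u,e),(w,e),(v,f),(w,f)$ are pairwise conflicting, so $\varphiinc{e}\cap\varphiinc{f}=\emptyset$; moreover $|\varphiinc{e}|=2$ since $(u,e)$ and $(w,e)$ conflict. Hence a $6$-coloring of $G_2$ assigns to every edge a $2$-element subset of the palette $\{1,\dots,6\}$ with adjacent edges receiving disjoint subsets. Two elementary consequences will be used: \textbf{(a)} around a vertex of degree $3$ the three edge-pairs are pairwise disjoint, so they partition $\{1,\dots,6\}$; \textbf{(b)} two disjoint $2$-subsets of a fixed $4$-set partition that $4$-set.

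Next I would read off the relevant part of $G_2$ from Figure~\ref{fig:operation1}: $v$ and $w$ are the two degree-$3$ vertices, they are joined by the edge $vw$ and by a common neighbour $p$ (so $vp,wp\in E(G_2)$), while $vx$ and $wy$ are the remaining (pendant) edges at $v$ and at $w$. Applying \textbf{(a)} at $v$ and at $w$, the families $\{\varphiinc{vx},\varphiinc{vw},\varphiinc{vp}\}$ and $\{\varphiinc{wy},\varphiinc{vw},\varphiinc{wp}\}$ each partition $\{1,\dots,6\}$.

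Now suppose for contradiction that some colour $c\in\varphiinc{vx}\cap\varphiinc{wy}$. From the two partitions we get $c\notin\varphiinc{vw}$, $c\notin\varphiinc{vp}$ and $c\notin\varphiinc{wp}$. Since $vp$ and $wp$ meet at $p$, the $2$-sets $\varphiinc{vp}$ and $\varphiinc{wp}$ are disjoint, and both are disjoint from $\varphiinc{vw}$, so by \textbf{(b)} they partition the $4$-set $\{1,\dots,6\}\setminus\varphiinc{vw}$, which contains $c$. Thus $c\in\varphiinc{vp}\cup\varphiinc{wp}$, contradicting the previous line; therefore $\varphiinc{vx}\cap\varphiinc{wy}=\emptyset$.

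The only point needing care is the second step: fixing exactly which edges of $G_2$ are present and adjacent, in particular that the triangle $vwp$ sits inside $G_2$ with $vx$ and $wy$ attached at $v$ and $w$ and that $v,w$ have no further edges, since the whole argument rests on this rigidity together with observation \textbf{(b)}; once the adjacency pattern is pinned down the colour count is immediate. It is also worth setting the notation so that the companion configurations $G_4,G_8,H_t$ can be handled by the same template in the subsequent lemmas.
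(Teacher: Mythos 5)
Your proof is correct and takes essentially the same approach as the paper's: both rest on the observation that the six incidences of the triangle (your $vwp$, the paper's $uvw$) are pairwise conflicting and therefore exhaust all six colors, which then forces the color pairs on the pendant edges $vx$ and $wy$ to be the two disjoint pairs left over at $v$ and at $w$. Your reconstruction of $G_2$ --- a triangle with the edges $vx$ and $wy$ attached at two of its vertices --- matches the paper's figure, so the argument goes through; the only cosmetic difference is that you argue by contradiction via partitions at the degree-$3$ vertices, whereas the paper fixes the six triangle colors without loss of generality and reads off the forced pairs directly.
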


\begin{proof}
If $\varphi$ is a conflict-free incidence $6$-coloring of $G_2$, then $\varphi (u,uv), \varphi (v,uv), \varphi (u,uw), \varphi (w,uw),\varphi (v,vw)$ and $\varphi (w,vw)$ are pairwise distinct, so we assume, without loss of generality, that they are $1,2,3,4,5$, and $6$, respectively. This forces that $\varphiinc{vx}=\{3,4\}$ and $\varphiinc{wy}=\{1,2\}$, as desired.
\end{proof}

\begin{lem}\label{l3}
If the configuration $G_4$ is colored with $6$ colors under $\varphi$, then $\varphiinc{u_1x}\cap  \varphiinc{v_1y}=\emptyset$.
\end{lem}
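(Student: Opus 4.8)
The plan is to mimic the proof of Lemma~\ref{l2}: seed a triangle inside $G_4$ with a full $6$-colouring, then propagate the forced colour pairs outward along the edges of $G_4$ until they pin down $\varphiinc{u_1x}$ and $\varphiinc{v_1y}$.

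Let $\varphi$ be a conflict-free incidence $6$-colouring of $G_4$ and look at the central triangle of $G_4$ (the analogue of $uvw$ in $G_2$). The six incidences on its three edges are pairwise conflicting, since any two of them lie in $\inc{z}$ for a common triangle vertex $z$; hence they use all six colours, and after renaming colours we may assume the triangle edges carry the pairs $\{1,2\}$, $\{3,4\}$, $\{5,6\}$. Next I would examine the two edges of $G_4$ joining the triangle to $u_1$ and to $v_1$. If one of them ends at the triangle vertex $z$, both its incidences lie in $\inc{z}$, which already shows the four colours on the two triangle edges through $z$; so that edge is forced to take the remaining pair, namely the colour pair of the unique triangle edge missing $z$. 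As these two connecting edges reach distinct triangle vertices (the maximum degree being $3$ rules out sharing one), they are forced to take the pairs of two distinct triangle edges, which are therefore disjoint; call them $P_1$ (at $u_1$) and $P_2$ (at $v_1$), and let $Q$ be the third triangle-edge pair, so $\{1,\ldots,6\}=P_1\cup P_2\cup Q$.

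Finally I would push the forcing one step further, onto $u_1v_1$ and then onto $u_1x$ and $v_1y$. The edge $u_1v_1$ has an incidence in $\inc{u_1}$ (which already shows $P_1$) and an incidence in $\inc{v_1}$ (which already shows $P_2$); since $P_1\cap P_2=\emptyset$, this forces $\varphiinc{u_1v_1}=Q$. Then $\inc{u_1}$ displays the four colours $P_1\cup Q$, so $\varphiinc{u_1x}$ is the remaining pair, which is $P_2$; symmetrically $\varphiinc{v_1y}=P_1$. Hence $\varphiinc{u_1x}\cap\varphiinc{v_1y}=P_2\cap P_1=\emptyset$.

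The main obstacle I foresee is bookkeeping, not insight: at each step one must check that the colours a vertex ``already shows'' really are four \emph{distinct} colours, so that ``the remaining pair'' is well defined, and that the chain of forced edges genuinely reaches both $u_1x$ and $v_1y$ without stalling at an under-determined edge. If the actual shape of $G_4$ is looser than a triangle with two short arms, one would split into cases on how the seed triangle is coloured; but the constraints --- $|\inc{v}|\le 6$ with all the colours on $\inc{v}$ distinct --- are tight enough that, up to renaming colours, this should reduce to the single case above, exactly as for $G_2$. I would also record that $G_4$ does admit a conflict-free incidence $6$-colouring (one is visible in the defining figure), so the lemma is not vacuous.
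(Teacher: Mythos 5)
Your argument is built on a guessed structure for $G_4$ that does not match the actual configuration, and it collapses at exactly that point. From the paper's treatment of $G_4$ (in this lemma and in Lemma \ref{l7}) one reads off that $G_4$ has five vertices $u_0,u_1,v_0,v_1,w$, internal edges $u_0u_1$, $u_0v_1$, $u_0w$, $u_1v_0$, $v_0v_1$, $v_0w$, and the two half-edges $u_1x$ and $v_1y$. This graph is triangle-free, and there is no edge $u_1v_1$. So your two load-bearing steps --- seeding a central triangle whose six incidences are pairwise conflicting and hence exhaust all six colours, and then forcing $\varphiinc{u_1v_1}=Q$ from the pairs at $u_1$ and $v_1$ --- have nothing to attach to. A legitimate seed does exist, namely the degree-$3$ vertex $u_1$, whose six incidences are pairwise conflicting; this is precisely the normalization $\varphiinc{u_1x}=\{1,2\}$, $\varphiinc{u_1v_0}=\{3,4\}$, $\varphiinc{u_0u_1}=\{5,6\}$ the paper adopts. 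But from that seed the colouring is \emph{not} forced up to renaming: in particular $\varphiinc{v_1y}$ is a priori unconstrained relative to $\varphiinc{u_1x}$, which is exactly why the lemma needs proving.

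The substance of the lemma is the part your hedge defers to ``bookkeeping'': one must separately refute the two bad possibilities $\varphiinc{u_1x}=\varphiinc{v_1y}$ and $|\varphiinc{u_1x}\cap\varphiinc{v_1y}|=1$, chasing the constraints around the $4$-cycle $u_0u_1v_0v_1$ and the paths through the degree-$2$ vertex $w$ until either the two edges $u_0w$ and $v_0w$ are forced to carry the same pair (impossible, since all four of their incidences lie in $\inc{w}$) or some edge is left with no admissible pair. That case analysis, driven by the true adjacencies of $G_4$, is the whole proof; since your write-up never engages with those adjacencies, it does not establish the statement.
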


\begin{proof}
If $\varphi$ is a conflict-free incidence $6$-coloring of $G_4$, we have three cases: $\varphiinc{u_1x}=\varphiinc{v_1y}$, or $\varphiinc{u_1x}\cap \varphiinc{v_1y}=\emptyset$, or $|\varphiinc{u_1x}\cap \varphiinc{v_1y}|=1$. If $\varphiinc{u_1x}\cap \varphiinc{v_1y}=\emptyset$, then we win. So it is sufficient to show contradictions for another two cases.
Without loss of generality, we assume $\varphiinc{u_1x}=\{1,2\}$,
$\varphiinc{u_1v_0}=\{3,4\}$, and $\varphiinc{u_0u_1}=\{5,6\}$.

\vspace{2mm}\textbf{Case 1.} \textit{ $\varphiinc{u_1x}=\varphiinc{v_1y}$.}

Now $\varphiinc{v_1y}\cup \varphiinc{(u_0u_1}=\{1,2,5,6\}$ and
 $\varphiinc{v_1y}\cup \varphiinc{u_1v_0}=\{1,2,3,4\}$
 forces $\varphiinc{u_0v_1}=\{3,4\}$ and
$\varphiinc{v_0v_1}=\{5,6\}$, respectively.
It follows
$\varphiinc{u_0u_1,u_0v_1}=\varphiinc{u_1v_0,v_0v_1}=\{3,4,5,6\}$ and thus
$\varphiinc{u_0w}=\varphiinc{v_0w}=\{1,2\}$, which is impossible.

\vspace{2mm}\textbf{Case 2.} \textit{ $|\varphiinc{u_1x}\cap \varphiinc{v_1y}|=1$.}

Assume, by symmetry, that $\varphiinc{v_1y}=\{1,a\}$, where $a\in\{3,4\}$. It follows that
$\varphiinc{v_1y}\cup \varphiinc{u_0u_1}=\{1,a,5,6\}$, forcing
$\varphiinc{u_0v_1}=\{2,b\},~b\in\{3,4\}\setminus \{a\}$.
Now
$\varphiinc{u_0u_1,u_0v_1}=\{2,b,5,6\}$
and
 $\varphiinc{v_1y}\cup \varphiinc{u_0v_1}=
 \{1,2,3,4\}$,
which implies
$\varphiinc{u_0w}=\{1,a\}$
and
$\varphiinc{v_0v_1}=\{5,6\}$, respectively. It follows that
 $\varphiinc{u_1v_0,v_0v_1,u_0w}=\{1,3,4,5,6\}$
 and thus $\inc{wv_0}$ have to be colored with 2, which is impossible.
\end{proof}

\begin{lem}\label{l4}
If the configuration $G_8$ is colored with $6$ colors under $\varphi$, then $\varphiinc{u_2x}\cap  \varphiinc{v_1y}=\emptyset$.
\end{lem}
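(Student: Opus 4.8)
The plan is to mimic the strategy of Lemmas \ref{l2} and \ref{l3}, but now the configuration $G_8$ has more vertices and edges, so a direct case analysis on the palettes of all relevant edges becomes unwieldy. Instead, I would first observe that $G_8$ contains two disjoint (or overlapping) copies of smaller configurations on which Lemma \ref{l3} (and possibly Lemma \ref{l2}) applies, and try to bootstrap from those. Concretely: I would name the vertices of $G_8$ as in Figure \ref{fig:operation1} and locate a subconfiguration isomorphic to $G_4$ sitting between the "left port" $x$ and some internal edge, so that Lemma \ref{l3} gives disjointness of the two incidence-pairs across that $G_4$; symmetrically on the right side between $y$ and another internal edge. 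Then the two internal edges are joined through the central part of $G_8$, and I would propagate the disjointness along this central path.

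The key steps, in order, would be: (1) fix notation for $G_8$ and identify the canonical triangle (or $K_4$-like core) whose six incidences are forced to be pairwise distinct, so WLOG assign them colors $1,\dots,6$; (2) use this to force the palettes of the edges incident to that core one step outward, exactly as in the proof of Lemma \ref{l3}, getting a small set of candidate palette patterns; (3) apply Lemma \ref{l3} to the embedded $G_4$ on the $x$-side to pin down $\varphiinc{u_2x}$ up to the already-fixed colors, and apply it again (or use a direct forcing argument) on the $y$-side to pin down $\varphiinc{v_1y}$; (4) show that any overlap $\varphiinc{u_2x}\cap\varphiinc{v_1y}\neq\emptyset$ forces, by counting colors around one of the degree-$\leq 6$ internal vertices, that some incidence has no available color — the same "five forbidden colors out of six" contradiction used repeatedly above. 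As in Lemma \ref{l3}, I expect to split into the cases $\varphiinc{u_2x}=\varphiinc{v_1y}$ and $|\varphiinc{u_2x}\cap\varphiinc{v_1y}|=1$ and derive a contradiction in each.

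The main obstacle will be step (3)–(4): controlling the propagation of forced palettes through the larger interior of $G_8$ without an explosion of subcases. In $G_4$ the forcing chain was short enough to handle two cases by hand; in $G_8$ the chain is longer, and the symmetry that made $G_4$ tractable may only be partial here, so I anticipate needing a cleaner invariant — e.g., tracking, for each internal edge $e$, the pair $\varphiinc{e}$ as a subset of a fixed $4$-element "free" color set and showing the parity/partition structure is preserved edge-by-edge. If such an invariant can be isolated, the contradiction at the end is routine; if not, a careful but finite case check (organized by which of the $\binom{6}{2}$ palettes $\varphiinc{u_2x}$ takes, reduced drastically by the WLOG normalization in step (1)) will close it. I would also double-check whether Lemma \ref{l2} or Lemma \ref{l3} can be invoked on *two* internal copies simultaneously, which would short-circuit most of the work and leave only a single joining argument in the middle.
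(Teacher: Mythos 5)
Your main proposed route has a gap that I think is fatal as stated: you want to locate copies of $G_4$ inside $G_8$ and invoke Lemma \ref{l3} on them, but no such embedding is justified, and none is apparent. The configurations $G_2$, $G_4$, $G_8$ are specific pictured gadgets in which the solid vertices are required to have exactly the pictured degrees in the ambient graph; $G_8$ (vertices $u_0,u_1,u_2,v_0,v_1$ with edges $u_0u_1$, $u_1u_2$, $u_0v_0$, $u_0v_1$, $v_0v_1$, $u_2v_0$ and ports $u_2x$, $v_1y$) is not assembled from $G_4$-subconfigurations, so Lemma \ref{l3} cannot be applied ``locally'' inside it. The propagation-along-a-central-path picture you describe therefore has nothing to propagate from. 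Your step (4) also misstates the shape of the final contradiction: the internal vertices have degree at most $3$ (so at most $6$ mutually conflicting incidences), and the contradiction one actually reaches is that two adjacent edges are forced to carry the same two-color palette, not a ``no available color'' count at a high-degree vertex.

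Your fallback --- a direct finite case check --- is the right approach and is exactly what the paper does, but you have not executed it, and you substantially overestimate its difficulty, which is what pushed you toward the speculative bootstrap. Concretely: the six incidences at the degree-$3$ vertex $v_1$ are pairwise conflicting, so WLOG $\varphiinc{v_1y}=\{1,2\}$, $\varphiinc{u_0v_1}=\{3,4\}$, $\varphiinc{v_0v_1}=\{5,6\}$. In the case $\varphiinc{u_2x}=\varphiinc{v_1y}=\{1,2\}$, the six incidences at $v_0$ force $\varphiinc{u_2v_0}=\{3,4\}$ and those at $u_0$ force $\varphiinc{u_0v_0}=\{1,2\}$; then both $u_0u_1$ and $u_1u_2$ are forced to the palette $\{5,6\}$, which is impossible at $u_1$. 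The case $|\varphiinc{u_2x}\cap\varphiinc{v_1y}|=1$ is a similarly short forcing chain ending with a single incidence of $u_1u_2$ having no legal color. So the whole lemma is two cases of three or four forced steps each --- no invariant and no explosion of subcases. As submitted, your proposal does not constitute a proof: the load-bearing steps are either unjustified (the $G_4$ embedding) or deferred (the case check).
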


\begin{proof}
If $\varphi$ is a conflict-free incidence $6$-coloring of $G_8$,
 we have three cases: $\varphiinc{u_2x}=\varphiinc{v_1y}$, or $\varphiinc{u_2x}\cap \varphiinc{v_1y}=\emptyset$, or $|\varphiinc{u_2x}\cap \varphiinc{v_1y}|=1$. If $\varphiinc{u_2x}\cap \varphiinc{v_1y}=\emptyset$, then we win. So it is sufficient to show contradictions for another two cases.
Without loss of generality, we assume $\varphiinc{v_1y}=\{1,2\}$,
$\varphiinc{u_0v_1}=\{3,4\}$, and $\varphiinc{v_0v_1}=\{5,6\}$.

\vspace{2mm}\textbf{Case 1.} \textit{ $\varphiinc{u_2x}=\varphiinc{v_1y}$.}

Now $\varphiinc{u_2x}\cup \varphiinc{v_0v_1}=\{1,2,5,6\}$ and
 $\varphiinc{u_0v_1,v_0v_1}=\{3,4,5,6\}$
 forces $\varphiinc{u_2v_0}=\{3,4\}$ and
$\varphiinc{u_0v_0}=\{1,2\}$, respectively. 
It follows
$\varphiinc{u_0v_0,u_0v_1}=\varphiinc{u_2x}\cup \varphiinc{u_2v_0}=\{1,2,3,4\}$
and thus
 $ \varphiinc{u_0u_1}=\varphiinc{u_1u_2}=\{5,6\}$,
 which is impossible.

\vspace{2mm}\textbf{Case 2.} \textit{ $|\varphiinc{u_2x}\cap \varphiinc{v_1y}|=1$.}

Assume, by symmetry, that $\varphiinc{u_2x}=\{1,a\}$, where $a\in\{5,6\}$. It follows that
$\varphiinc{u_0v_1}\cup \varphiinc{v_0v_1}=\{3,4,5,6\}$ and $\varphiinc{v_0v_1}=\{5,6\}$, forcing
$\varphiinc{u_0v_0}=\{1,2\}$ and $\varphiinc{u_2v_0}=\{3,4\}$.
Now
$\varphiinc{u_0v_0,u_0v_1}=\{1,2,3,4\}$
which implies
$\varphiinc{u_0u_1}=\{5,6\}$. It follows that
 $\varphiinc{u_0u_1,u_2x,u_2v_0}=\{1,3,4,5,6\}$
 and thus $\inc{u_1u_2}$ have to be colored with 2, which is impossible.
\end{proof}

\begin{lem}\label{l5}
If the configuration $H_t$ with some $t\geq 1$   is colored with $6$ colors under $\varphi$,
then $\varphiinc{x_{t-1}x_t}=\varphiinc{y_{t-1}y_t}$.
\end{lem}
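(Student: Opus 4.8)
The plan is to argue by induction on $t$, handling $H_1$ by a direct case analysis in the style of Lemmas \ref{l3} and \ref{l4}, and then propagating the asserted equality outward one ``layer'' at a time. The structural fact exploited throughout is the same one used in Lemmas \ref{l2}--\ref{l4}: at every vertex $v$ of degree $3$ the six incidences in $\inc{v}$ are pairwise conflicting, hence receive the six colors $1,\dots,6$ in some order, while at every vertex of degree $2$ the four incidences in $\inc{v}$ use four distinct colors. Consequently, for two edges $e,f$ sharing a vertex one always has $\varphiinc{e}\cap\varphiinc{f}=\emptyset$ or $|\varphiinc{e}\cap\varphiinc{f}|=1$ according to how many of their endpoint-incidences are forced apart, which sharply limits how the color-pairs of adjacent edges can overlap.

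For the base case $t=1$, I would start from the ``innermost'' rigid part of $H_1$, fix (without loss of generality) the six colors around one degree-$3$ vertex there, say $\varphiinc{x_0x_1}=\{1,2\}$ with the two other edges at that vertex colored by $\{3,4\}$ and $\{5,6\}$, and then push these constraints through the remaining edges. As in Lemmas \ref{l3} and \ref{l4}, the three a priori possibilities $\varphiinc{x_0x_1}=\varphiinc{y_0y_1}$, $\varphiinc{x_0x_1}\cap\varphiinc{y_0y_1}=\emptyset$, and $|\varphiinc{x_0x_1}\cap\varphiinc{y_0y_1}|=1$ must be examined, and one shows that the latter two force some incidence to receive a forbidden color, or some edge to have both of its incidences equal, so that only $\varphiinc{x_0x_1}=\varphiinc{y_0y_1}$ survives.

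For the inductive step $t\ge 2$, I would observe that $H_t$ contains a copy of $H_{t-1}$ obtained by deleting its outermost layer (the vertices $x_t,y_t$ together with the edges incident with them, with $x_{t-1}$ and $y_{t-1}$ now playing the role of the terminal vertices of $H_{t-1}$), and that the restriction of a conflict-free $6$-coloring of $H_t$ to the incidences of this copy is again a conflict-free $6$-coloring: whether two incidences conflict depends only on the edges involved, so no constraint is lost under restriction, while the larger degrees inside $H_t$ only add constraints. The induction hypothesis then yields $\varphiinc{x_{t-2}x_{t-1}}=\varphiinc{y_{t-2}y_{t-1}}$. It remains to propagate this equality across the single outermost cell: knowing the common color-pair on the two edges entering the cell at $x_{t-1}$ and $y_{t-1}$, and using that $\inc{x_{t-1}}$ and $\inc{y_{t-1}}$ each exhaust $\{1,\dots,6\}$ together with the constraints coming from the crossed diagonal of that cell, one determines $\varphiinc{x_{t-1}x_t}$ and $\varphiinc{y_{t-1}y_t}$ and checks that they coincide.

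I expect the main obstacle to be precisely this propagation step (and the corresponding core of the base case): to make the induction close, one probably has to carry more information than the bare equality of unordered pairs $\varphiinc{x_{i-1}x_i}$ and $\varphiinc{y_{i-1}y_i}$ --- namely which color sits at which endpoint, and the color-pair of the crossed diagonal of each cell --- so the inductive invariant must likely be strengthened before it reproduces itself. In addition, the bookkeeping of the ``solid vertex'' degrees when identifying the deleted-layer subgraph with $H_{t-1}$ has to be done carefully so that the induction hypothesis genuinely applies, and, as in the earlier lemmas, the by-hand elimination of the ``partial overlap'' case is where the argument is most delicate.
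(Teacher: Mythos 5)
Your overall strategy --- induction on $t$, with a hands-on analysis of $H_1$ and a propagation step across the outermost layer --- is exactly the paper's. The base case as you plan it (a three-way case analysis on how $\varphiinc{x_0x_1}$ and $\varphiinc{y_0y_1}$ overlap) would go through, though the paper does it more directly: normalizing the six colors around the degree-$3$ vertex $x'$ forces $\varphiinc{x_0y'}=\{3,4\}$ and $\varphiinc{y'y_0}=\{5,6\}$, whence $\varphiinc{x_0x_1}=\varphiinc{y_0y_1}=\{1,2\}$ with no cases at all.

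The genuine gap is in the inductive step, which you flag as ``the main obstacle'' and do not actually carry out; worse, you conjecture that the invariant must be strengthened (to record which color sits at which endpoint, plus the colors of a ``crossed diagonal'' of the outer cell). Neither is true. First, the outermost layer of $H_t$ contains no crossing: $H_{t-1}$ is obtained from $H_t$ by deleting just the three edges $x_{t-1}y_{t-1}$, $x_{t-1}x_t$, $y_{t-1}y_t$ (the crossing pair $x'y_0$, $x_0y'$ lives only in the innermost core), so there is no diagonal to track. Second, the bare equality of unordered pairs already reproduces itself: in $H_t$ the vertices $x_{t-1}$ and $y_{t-1}$ have degree $3$, so the six incidences at each exhaust $\{1,\dots,6\}$, giving $\varphiinc{x_{t-1}x_t}=\{1,\dots,6\}\setminus\bigl(\varphiinc{x_{t-2}x_{t-1}}\cup \varphiinc{x_{t-1}y_{t-1}}\bigr)$ and symmetrically $\varphiinc{y_{t-1}y_t}=\{1,\dots,6\}\setminus\bigl(\varphiinc{y_{t-2}y_{t-1}}\cup \varphiinc{x_{t-1}y_{t-1}}\bigr)$. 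Since the induction hypothesis yields $\varphiinc{x_{t-2}x_{t-1}}=\varphiinc{y_{t-2}y_{t-1}}$ and the middle edge $x_{t-1}y_{t-1}$ is common to both expressions, the two complements coincide. So the step you were unsure about is a one-line complement computation, no strengthened invariant is needed, and your restriction-to-$H_{t-1}$ justification for invoking the induction hypothesis is fine as stated; what is missing from your write-up is precisely this computation.
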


\begin{proof}
We prove it by induction on $t$.
If $\varphi$ is a conflict-free incidence $6$-coloring of $H_1$, then we assume, without loss of generality,
   $\varphi(x',x'y'), \varphi(y',x'y'), \varphi(x',x'y_0), \varphi(y_0,x'y_0),
 \varphi(x',x'x_0)$, and $\varphi(x_0,x'x_0)$ are $1,2,3,4,5$, and $6$, respectively.
 Since $\varphiinc{x'y',x'x_0}=\{1,2,5,6\}$ and $\varphiinc{x'y',x'y_0}=\{1,2,3,4\}$, we have $\varphiinc{x_0y'}=\{3,4\}$ and $\varphiinc{y'y_0}=\{5,6\}$, which imply $\varphiinc{x_0x_1}=\varphiinc{y_0y_1}=\{1,2\}$. This completes the proof of the base case.
 Now suppose that the lemma holds for $H_{t-1}$ with some $t\geq 2$ and prove that it also holds for $H_{t}$. By the induction hypothesis, $\varphiinc{x_{t-2}x_{t-1}}=\varphiinc{y_{t-2}y_{t-1}}$. This implies $\varphiinc{x_{t-1}x_{t}}=\{1,2,3,4,5,6\}\setminus \{\varphiinc{x_{t-2}x_{t-1}}\cup \varphiinc{x_{t-1}y_{t-1}}\}$ and $\varphiinc{y_{t-1}y_{t}}=\{1,2,3,4,5,6\}\setminus \{\varphiinc{y_{t-2}y_{t-1}}\cup \varphiinc{x_{t-1}y_{t-1}}\}$, and thus $\varphiinc{x_{t-1}x_t}=\varphiinc{y_{t-1}y_t}$, as desired.
 \end{proof}

\begin{lem}\label{l6}
If $\varphi$ is a partial incidence coloring of the configuration $G_2$ such that $\varphiinc{vx}\cap \varphiinc{wy}=\emptyset,$
 then $\varphi$ can be extended to a conflict-free incidence $6$-coloring of the configuration $G_2$.
\end{lem}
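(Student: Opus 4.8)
The plan is to reverse-engineer the forcing argument of Lemma \ref{l2}. Recall that in $G_2$ the vertices $u,v,w$ form a triangle, with $vx$ and $wy$ being the two pendant-type edges attached at $v$ and $w$ respectively; the incidences that must be mutually distinct are exactly the six incidences at the triangle together with the constraints linking $\inc{vx}$ to $\inc{v}$ and $\inc{wy}$ to $\inc{w}$. So I would start from the given partial coloring: we are handed $\varphiinc{vx}$ and $\varphiinc{wy}$ with $\varphiinc{vx}\cap\varphiinc{wy}=\emptyset$. Write $\varphiinc{vx}=\{a,b\}$ and $\varphiinc{wy}=\{c,d\}$ with $\{a,b\}\cap\{c,d\}=\emptyset$; here, as in the statement of the lemma, I take for granted the labelling in which $(v,vx)$ gets $a$, $(x,vx)$ gets $b$, $(w,wy)$ gets $c$, $(y,wy)$ gets $d$ (the value at $x$ and at $y$ is irrelevant since $x$ and $y$ have no further incidences in the configuration, so those can always be chosen freely among the six colours avoiding the one other incidence at $x$, resp.\ $y$).

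First I would color the triangle. The constraints at $v$ say $\varphi(v,uv)$ and $\varphi(v,vw)$ must avoid $a$ (the colour of $(v,vx)$) and must differ from each other; similarly at $w$ the incidences $\varphi(w,uw),\varphi(w,vw)$ avoid $c$; and at $u$ the incidences $\varphi(u,uv),\varphi(u,uw)$ just differ from each other. Set $\varphi(v,vw)=c$ and $\varphi(w,vw)=a$ — this is consistent because $c\neq a$ and because $c\notin\{a,b\}$ so it differs from $\varphi(v,vx)=a$, and $a\notin\{c,d\}$ so it differs from $\varphi(w,wy)=c$. Then I have two colours left outside $\{a,c\}$ to distribute among the remaining four triangle incidences $\varphi(u,uv),\varphi(v,uv),\varphi(u,uw),\varphi(w,uw)$, subject to: $\varphi(v,uv)\neq a$ (avoid $(v,vx)$) and $\varphi(v,uv)\neq c=\varphi(v,vw)$; $\varphi(w,uw)\neq c$ and $\varphi(w,uw)\neq a=\varphi(w,vw)$; $\varphi(u,uv)\neq\varphi(u,uw)$; and the edge constraint $\varphi(u,uv)\neq\varphi(v,uv)$, $\varphi(u,uw)\neq\varphi(w,uw)$. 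The cleanest move is to reproduce exactly the pattern appearing in the proof of Lemma \ref{l2}: since we only need existence, I would just exhibit one explicit completion, namely taking the six triangle incidences $(u,uv),(v,uv),(u,uw),(w,uw),(v,vw),(w,vw)$ to receive six distinct colours (possible because $|\varphiinc{vx}\cup\varphiinc{wy}|=4<6$ leaves room and in fact Lemma \ref{l2} shows the triangle is always rainbow-coloured and then forces $\varphiinc{vx},\varphiinc{wy}$ to the two complementary pairs) — so I should instead run the logic of Lemma \ref{l2} in reverse: choose the rainbow colouring $1,\dots,6$ of the triangle so that $\{3,4\}=\varphiinc{vx}$ and $\{1,2\}=\varphiinc{wy}$, which is possible precisely because these two pairs are disjoint, fill in the remaining two colours, and check all conflicts.

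Having fixed a rainbow colouring of the triangle that is compatible with the prescribed $\varphiinc{vx}$ and $\varphiinc{wy}$, the remaining incidences to colour are $(x,vx)$ and $(y,wy)$, each of which conflicts only with the one other incidence on its own edge, so each has at least five legal colours and can be assigned arbitrarily. I would then verify conflict-freeness: the only conflicting pairs are within $\inc{u}$, $\inc{v}$, $\inc{w}$, $\inc{x}$, $\inc{y}$; the sets $\inc{x},\inc{y}$ are handled by the last remark, $\inc{u}$ is fine because the triangle is rainbow, and $\inc{v},\inc{w}$ are fine because $\varphiinc{vx}$ was placed disjointly from the triangle-incidences at $v$ other than $(v,vx)$ — which is exactly what the rainbow choice guarantees — and symmetrically for $w$.

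I do not anticipate a genuine obstacle: the content is finite bookkeeping on a five-vertex configuration, and the argument is essentially the converse of the already-proved Lemma \ref{l2}. The one thing to be careful about is the labelling convention — making sure that "$\varphiinc{vx}$'' as a set of two colours is positioned to coincide with the two triangle-incidences \emph{not} incident to $v$ via the triangle but rather the pair $\{\varphi(u,uw),\varphi(w,uw)\}$, i.e.\ matching the forced pattern $\varphiinc{vx}=\{\varphi(u,uw),\varphi(w,uw)\}$ and $\varphiinc{wy}=\{\varphi(u,uv),\varphi(v,uv)\}$ from Lemma \ref{l2} — so that the extension really is forced-consistent rather than merely plausible. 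Once the correspondence is set up correctly, writing "assign the triangle the rainbow colouring dictated by Lemma \ref{l2}, then colour $(x,vx)$ and $(y,wy)$ freely'' completes the proof.
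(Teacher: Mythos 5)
Your proposal is correct and, after the detours, lands on exactly the paper's proof: the paper simply exhibits the explicit completion $\varphiinc{uv}=\varphiinc{wy}$, $\varphiinc{uw}=\varphiinc{vx}$, $\varphiinc{vw}=\{1,\dots,6\}\setminus(\varphiinc{vx}\cup\varphiinc{wy})$, which is the rainbow triangle matching the forced pattern of Lemma~\ref{l2} that you describe. One aside in your write-up is factually wrong but harmless: $(x,vx)$ does \emph{not} conflict only with $(v,vx)$ --- since $\inc{vx}\subseteq\inc{v}$ it conflicts with all five other incidences in $\inc{v}$, so it could not be ``assigned arbitrarily''; this causes no gap only because $\varphiinc{vx}$ is part of the given partial coloring and your final verification correctly checks that it is disjoint from the triangle incidences at $v$.
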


\begin{proof}
 Suppose $\varphiinc{vx}=\{1,2\}$ and $\varphiinc{wy}=\{3,4\}$. It is easy to see that we can extend $\varphi$ to a conflict-free incidence $6$-coloring of $G_2$ by coloring $\inc{uv,uw,vw}$ so that $\varphiinc{uv}=\{3,4\}$, $\varphiinc{uw}=\{1,2\}$, and $\varphiinc{vw}=\{5,6\}$.
\end{proof}

\begin{lem}\label{l7}
If $\varphi$ is a partial incidence coloring of the configuration $G_4$ such that
$\varphiinc{u_1x}\cap \varphiinc{v_1y}=\emptyset,$
 then $\varphi$ can be extended to a conflict-free incidence $6$-coloring of the configuration $G_4$.
\end{lem}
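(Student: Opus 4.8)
The plan is to translate the extension problem into a question about unordered color pairs on the edges of $G_4$ and then to exhibit one explicit solution that works for every admissible starting coloring. Recall the shape of $G_4$: it has four vertices $u_0,u_1,v_0,v_1$ of degree three, one vertex $w$ of degree two adjacent to $u_0$ and $v_0$, and two degree-one ``interface'' vertices $x$ (adjacent to $u_1$) and $y$ (adjacent to $v_1$); the edges are $u_1x,\,u_0u_1,\,u_1v_0,\,u_0v_1,\,v_0v_1,\,u_0w,\,v_0w,\,v_1y$. Since two incidences conflict precisely when they both lie in $\inc{z}$ for some vertex $z$, an incidence coloring with colors $\{1,\dots,6\}$ is conflict-free if and only if $\inc{z}$ is rainbow for every $z$. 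This is automatic at $x$ and $y$; at each of $u_0,u_1,v_0,v_1$, which has exactly six incidences, it says that the three pairs $\varphiinc{\cdot}$ carried by the three incident edges partition $\{1,\dots,6\}$; and at $w$ it says merely that $\varphiinc{u_0w}\cap\varphiinc{v_0w}=\emptyset$.

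The first point I would make is that, once the six unordered pairs satisfying these conditions are fixed, the coloring is conflict-free no matter how each pair is split between the two ends of its edge. In particular the split already chosen by $\varphi$ on $\inc{u_1x}$ and on $\inc{v_1y}$ causes no difficulty, because $x$ and $y$ contribute nothing and $u_1,v_1$ see all six colors anyway. Hence the whole task reduces to choosing the pairs on the six still-uncolored edges $u_0u_1,\,u_1v_0,\,u_0v_1,\,v_0v_1,\,u_0w,\,v_0w$.

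Using the hypothesis $\varphiinc{u_1x}\cap\varphiinc{v_1y}=\emptyset$, I would relabel colors so that $\varphiinc{u_1x}=\{1,2\}$ and $\varphiinc{v_1y}=\{3,4\}$, and then set $\varphiinc{u_0u_1}=\{5,6\}$, $\varphiinc{u_1v_0}=\{3,4\}$, $\varphiinc{u_0v_1}=\{1,2\}$, $\varphiinc{v_0v_1}=\{5,6\}$, $\varphiinc{u_0w}=\{3,4\}$ and $\varphiinc{v_0w}=\{1,2\}$. A one-line check confirms that the triples of pairs at $u_1$ (namely $\{1,2\},\{5,6\},\{3,4\}$), at $u_0$ ($\{5,6\},\{1,2\},\{3,4\}$), at $v_0$ ($\{3,4\},\{5,6\},\{1,2\}$) and at $v_1$ ($\{1,2\},\{5,6\},\{3,4\}$) are each a partition of $\{1,\dots,6\}$, and that $\varphiinc{u_0w}\cap\varphiinc{v_0w}=\emptyset$. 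Splitting each new pair arbitrarily then yields the desired conflict-free incidence $6$-coloring extending $\varphi$.

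I do not expect a real obstacle: the statement is a finite verification, and the only idea needed is the reduction to unordered pairs together with the remark that every degree-three vertex already exhausts the palette, so one never has to track orientations or interactions between non-adjacent edges. The one place where care is needed is the degree-two vertex $w$, whose extra disjointness requirement is exactly what the explicit assignment above is engineered to meet; a slightly different choice of the pairs on $u_0w$ and $v_0w$ could violate it, so that is the step I would double-check most carefully.
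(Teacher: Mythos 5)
Your proof is correct and follows essentially the same route as the paper's: after normalizing to $\varphiinc{u_1x}=\{1,2\}$ and $\varphiinc{v_1y}=\{3,4\}$, you exhibit exactly the same assignment of pairs to the six remaining edges ($\{5,6\}$ on $u_0u_1$ and $v_0v_1$, $\{3,4\}$ on $u_1v_0$ and $u_0w$, $\{1,2\}$ on $u_0v_1$ and $v_0w$). The only difference is that you spell out the (correct) reduction to disjointness of unordered pairs at each vertex, which the paper leaves implicit in ``it is easy to see.''
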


\begin{proof}
 Suppose $\varphiinc{u_1x}=\{1,2\}$ and $\varphiinc{v_1y}=\{3,4\}$. It is easy to see that we can extend $\varphi$ to a conflict-free incidence $6$-coloring of $G_4$ by coloring  $\inc{u_0v_1, v_0w, u_0w, u_1v_0, u_0u_1,v_0v_1}$ so that $\varphiinc{u_0v_1}=\varphiinc{v_0w}=\{1,2\}$, $\varphiinc{u_0w}=\varphiinc{u_1v_0}=\{3,4\}$, and $\varphiinc{u_0u_1}=\varphiinc{v_0v_1}=\{5,6\}$.
\end{proof}

\begin{lem}\label{l8}
If $\varphi$ is a partial incidence coloring of the configuration $G_8$ such that
$\varphiinc{u_2x}\cap \varphiinc{v_1y}=\emptyset,$
then $\varphi$ can be extended to a conflict-free incidence $6$-coloring of the configuration $G_8$.
\end{lem}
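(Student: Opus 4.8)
The plan is to argue exactly as in Lemmas~\ref{l6} and~\ref{l7}: normalise the given boundary colors by a relabelling of the palette, then exhibit one explicit extension and check the finitely many local constraints. Recall that, by the reduction already used throughout this section, coloring $G_8$ with $6$ colors means assigning each edge $e$ a two-element set $\varphiinc{e}\subseteq\{1,\dots,6\}$ so that at every vertex $w$ the sets on the edges through $w$ are pairwise disjoint (this is precisely conflict-freeness, since the incidences of $\inc{w}$ are then coloured with $2d(w)$ distinct colors). Since $\varphiinc{u_2x}$ and $\varphiinc{v_1y}$ are \emph{disjoint} two-element subsets of a $6$-element palette, I may assume without loss of generality that $\varphiinc{u_2x}=\{1,2\}$ and $\varphiinc{v_1y}=\{3,4\}$; this is the only place the hypothesis $\varphiinc{u_2x}\cap\varphiinc{v_1y}=\emptyset$ enters. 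The edges of $G_8$ still to be colored are, as in the proof of Lemma~\ref{l4}, $u_0v_1$, $v_0v_1$, $u_2v_0$, $u_0v_0$, $u_0u_1$ and $u_1u_2$, with the local constraints being pairwise disjointness of the sets on $\{u_0v_1,v_0v_1,v_1y\}$, on $\{u_2x,u_1u_2,u_2v_0\}$, on $\{v_0v_1,u_2v_0,u_0v_0\}$, on $\{u_0u_1,u_0v_0,u_0v_1\}$, and on $\{u_0u_1,u_1u_2\}$.

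Next I would simply set $\varphiinc{u_0v_1}=\{5,6\}$, $\varphiinc{v_0v_1}=\{1,2\}$, $\varphiinc{u_2v_0}=\{5,6\}$, $\varphiinc{u_0v_0}=\{3,4\}$, $\varphiinc{u_0u_1}=\{1,2\}$, $\varphiinc{u_1u_2}=\{3,4\}$, and verify disjointness vertex by vertex: through $u_0$ the sets are $\{5,6\},\{3,4\},\{1,2\}$; through $u_1$ they are $\{1,2\},\{3,4\}$; through $u_2$ they are $\{1,2\},\{3,4\},\{5,6\}$ (the first from $u_2x$); through $v_0$ they are $\{1,2\},\{5,6\},\{3,4\}$; and through $v_1$ they are $\{5,6\},\{1,2\},\{3,4\}$ (the last from $v_1y$). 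In each bundle the blocks are pairwise disjoint, so $\varphi$ extends to a conflict-free incidence $6$-coloring of $G_8$, as required.

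I do not expect a genuine obstacle: the statement is a bounded case check, and the only thing worth a moment's thought is whether the three ``clean'' pairs $\{1,2\},\{3,4\},\{5,6\}$ alone suffice or whether some pair must be split across $G_8$ (as happens for odd cycles in Algorithm~\ref{algo:cycle}). The assignment above shows no splitting is needed because the disjointness constraints propagate consistently around $G_8$; and the same bookkeeping will later supply the extension step needed to push a $7$-colouring through the operation $G\sqcup_z G_8$, the color used outside the configuration being irrelevant at the two attachment vertices $x$ and $y$.
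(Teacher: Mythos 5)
Your proposal is correct and matches the paper's proof essentially verbatim: after normalising to $\varphiinc{u_2x}=\{1,2\}$ and $\varphiinc{v_1y}=\{3,4\}$, your explicit assignment ($\{1,2\}$ on $v_0v_1,u_0u_1$; $\{3,4\}$ on $u_0v_0,u_1u_2$; $\{5,6\}$ on $u_0v_1,u_2v_0$) is exactly the one the paper gives, and your vertex-by-vertex disjointness check is the same finite verification the paper leaves implicit.
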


\begin{proof}

 Suppose $\varphiinc{u_2x}=\{1,2\}$ and $\varphiinc{v_1y}=\{3,4\}$. We can extend $\varphi$ to a conflict-free incidence $6$-coloring of $G_4$ by coloring the incidences on $v_0v_1, u_0u_1, u_0v_0, u_1u_2, u_0v_1$, and $u_2v_0$ so that $\varphiinc{v_0v_1}=\varphiinc{u_0u_1}=\{1,2\}$, $\varphiinc{u_0v_0}=\varphiinc{u_1u_2}=\{3,4\}$, and $\varphiinc{u_0v_1}=\varphiinc{u_2v_0}=\{5,6\}$.

\end{proof}

\begin{lem}\label{l9}
If $\varphi$ is a partial incidence coloring of the configuration $H_t$ with some $t\geq 1$ such that
$\varphiinc{x_{t-1}x_t}=\varphiinc{y_{t-1}y_t}$,
then $\varphi$ can be extended to a conflict-free incidence $6$-coloring of the configuration $H_t$.
\end{lem}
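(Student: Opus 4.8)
The plan is to prove this by induction on $t$, essentially running the forcing argument of Lemma \ref{l5} backwards. As in the proofs of Lemmas \ref{l6}--\ref{l8}, it suffices to treat the case where $\varphi$ colours only the two attaching edges $x_{t-1}x_t$ and $y_{t-1}y_t$, and after relabelling colours we may assume $\varphiinc{x_{t-1}x_t}=\varphiinc{y_{t-1}y_t}=\{1,2\}$. For the base case $t=1$, recall that $H_1$ is the gadget on $x',y',x_0,y_0$ with edges $x'y',x'y_0,x'x_0,x_0y',y'y_0$ together with the attaching edges $x_0x_1,y_0y_1$. I would exhibit the extension $\varphiinc{x'y'}=\{1,2\}$, $\varphiinc{x'y_0}=\varphiinc{x_0y'}=\{3,4\}$, $\varphiinc{x'x_0}=\varphiinc{y'y_0}=\{5,6\}$ (the order of the two colours inside each incidence pair being irrelevant), and then check that the six incidences of $\inc{w}$ receive pairwise distinct colours for each $w\in\{x',y',x_0,y_0\}$, while $x_1$ and $y_1$ impose no constraint. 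This is precisely the colouring that was forced in the base case of Lemma \ref{l5}, read in reverse.

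For the inductive step, let $t\ge 2$, assume the statement for $H_{t-1}$, and recall that $H_t$ arises from $H_{t-1}$ (whose attaching edges inside $H_t$ are now $x_{t-2}x_{t-1}$ and $y_{t-2}y_{t-1}$) by adding the rung $x_{t-1}y_{t-1}$ and the two rails $x_{t-1}x_t$, $y_{t-1}y_t$. Since $d_{H_t}(x_{t-1})=d_{H_t}(y_{t-1})=3$, the requirement that $\inc{x_{t-1}}$ and $\inc{y_{t-1}}$ be rainbow forces each of $\varphiinc{x_{t-2}x_{t-1}}$ and $\varphiinc{y_{t-2}y_{t-1}}$ to be the complement in $\{3,4,5,6\}$ of $\varphiinc{x_{t-1}y_{t-1}}$, hence to coincide. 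I would therefore colour the new part by taking $\varphiinc{x_{t-1}y_{t-1}}=\{3,4\}$ and $\varphiinc{x_{t-2}x_{t-1}}=\varphiinc{y_{t-2}y_{t-1}}=\{5,6\}$, verify directly that all conflicting incidences among those on $x_{t-2}x_{t-1},y_{t-2}y_{t-1},x_{t-1}y_{t-1},x_{t-1}x_t,y_{t-1}y_t$ are coloured differently, and then invoke the induction hypothesis on the partial colouring now present on $H_{t-1}$, which satisfies $\varphiinc{x_{t-2}x_{t-1}}=\varphiinc{y_{t-2}y_{t-1}}$, to extend it to a conflict-free $6$-colouring of $H_{t-1}$.

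The delicate point, and the one I expect to require the most care, is to check that splicing the induction-hypothesis colouring of $H_{t-1}$ onto the freshly coloured rung and rails creates no new conflict. Here I would argue that the only edges of $H_{t-1}$ incident to $x_{t-1}$ or $y_{t-1}$ are $x_{t-2}x_{t-1}$ and $y_{t-2}y_{t-1}$, whose colours were fixed before the hypothesis was invoked, so the extension colours no incidence of $\inc{x_{t-1}}\cup\inc{y_{t-1}}$; since every incidence lying on the rung or on a rail belongs to $\inc{x_{t-1}}\cup\inc{y_{t-1}}\cup\inc{x_t}\cup\inc{y_t}$ and the vertices $x_t,y_t$ carry no incidence of $H_{t-1}$, no pair straddling the old and the new part can conflict. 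All the remaining verifications are finite and routine, the base case being the only genuine computation.
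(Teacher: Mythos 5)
Your proof is correct and follows essentially the same route as the paper's: induction on $t$, an explicit colouring for the base case $H_1$, and in the inductive step colouring the rung $x_{t-1}y_{t-1}$ and the edges $x_{t-2}x_{t-1},y_{t-2}y_{t-1}$ so that the latter two agree, then invoking the hypothesis on $H_{t-1}$. The only differences are cosmetic (you swap the roles of $\{3,4\}$ and $\{5,6\}$ on the new edges) and your justification that the splice creates no cross-conflicts is in fact slightly more careful than the paper's.
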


\begin{proof}
We prove it by induction on $t$.
If $\varphi$ is a partial incidence coloring of the configuration $H_1$ such that
$\varphiinc{x_{0}x_1}=\varphiinc{y_{0}y_1}=\{1,2\}$, then $\varphi$ can be extended to a conflict-free incidence $6$-coloring of $H_t$ by coloring $\inc{x'y', x'y_0, x_0y', x'x_0, y'y_0}$ so that $\varphiinc{x'y'}=\{1,2\}$, $\varphiinc{x'y_0}=\varphiinc{x_0y'}=\{3,4\}$, and $\varphiinc{x'x_0}=\varphiinc{y'y_0}=\{5,6\}$.
This completes the proof of the base case.
 Now suppose that the lemma holds for $H_{t-1}$ with some $t\geq 2$ and prove that it also holds for $H_{t}$.
Assume, without loss of generality, that  $\varphiinc{x_{t-1}x_t}=\varphiinc{y_{t-1}y_t}=\{1,2\}$. We extend $\varphi$ by coloring $\inc{x_{t-2}x_{t-1},y_{t-2}y_{t-1},x_{t-1}y_{t-1}}$ so that $\varphiinc{x_{t-2}x_{t-1}}=\varphiinc{y_{t-2}y_{t-1}}=\{3,4\}$ and $\varphiinc{x_{t-1}y_{t-1}}=\{5,6\}$.
This constructs a partial incidence coloring of the configuration $H_{t-1}=H_t-\{x_{t-1}y_{t-1},x_{t-1}x_t,y_{t-1}y_t\}$ such that
$\varphiinc{x_{t-2}x_{t-1}}=\varphiinc{y_{t-2}y_{t-1}}$. Since any incidence of $\inc{x_{t-1}y_{t-1},x_{t-1}x_t,y_{t-1}y_t}$ is conflict-free to any
incidence of $\inc{H_{t-1}}$, by the induction hypothesis, the extended $\varphi$ can be further extended to a conflict-free incidence $6$-coloring of the configuration $H_t$.
\end{proof}

\begin{prop}\label{thm:p}
If $G\in \mathcal{P}$, then $\chi_i^{c}(G)=7$.
\end{prop}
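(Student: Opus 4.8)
The plan is to induct on $|V(G)|$, using the recursive description of $\mathcal{P}$, and to prove simultaneously that $\chi^{c}_i(G)\ge 7$ and $\chi^{c}_i(G)\le 7$. First I would record the cheap facts: every $G\in\mathcal{P}$ is a connected simple outer-$1$-planar graph with $\Delta(G)=3$ (immediate from Figure \ref{fig:operation1} and from the fact that each operation preserves these properties, the solid vertices having prescribed degree), so Proposition \ref{prop:1} already gives $\chi^{c}_i(G)\ge 2\Delta(G)=6$. Hence it suffices to show that $G$ admits \emph{no} conflict-free incidence $6$-coloring but \emph{does} admit a conflict-free incidence $7$-coloring. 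The base case $G=K_4^+$ is exactly Lemma \ref{k4}. For the inductive step I write $G'\in\mathcal{P}\setminus\{K_4^+\}$ as obtained from a smaller $G\in\mathcal{P}$ by one operation (every operation adds vertices, so $|V(G)|<|V(G')|$), so by induction $G$ has no $6$-coloring and has a $7$-coloring; note that $G-z$ (for $G\sqcup_z G_t$) or $G-z_1z_2$ (for $G\vee_{z_1z_2}H_t$) is a subgraph of $G'$, obtained from the pasted configuration by deleting its new vertices.

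\emph{No $6$-coloring.} Suppose for contradiction that $\varphi$ is a conflict-free incidence $6$-coloring of $G'$. Restricting $\varphi$ to the pasted copy of $G_t$ (resp.\ $H_t$) and applying the matching one of Lemmas \ref{l2}--\ref{l5}, the two boundary incidence-pairs of the configuration incident to $z_1$ and $z_2$ — namely $\varphiinc{vx},\varphiinc{wy}$ for $G_2$, $\varphiinc{u_1x},\varphiinc{v_1y}$ for $G_4$, $\varphiinc{u_2x},\varphiinc{v_1y}$ for $G_8$, and $\varphiinc{x_{t-1}x_t},\varphiinc{y_{t-1}y_t}$ for $H_t$ — are disjoint (for the first three) or equal (for $H_t$). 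I would then build a conflict-free incidence $6$-coloring $\psi$ of $G$: keep $\varphi$ on $G-z$ (resp.\ $G-z_1z_2$), and color the reinstated edge(s) by copying colors off those boundary edges — on $zz_1$ put the $z_1$-end color of the $z_1$-boundary edge at $z_1$ and its other color at $z$, symmetrically on $zz_2$ (in the $H_t$ case simply give the two ends of $z_1z_2$ the two colors of the common pair). The disjointness/equality supplied by Lemmas \ref{l2}--\ref{l5} is exactly what forces the four incidences of $\inc{z}$ (resp.\ the two incidences of $\inc{z_1z_2}$) to get distinct colors, and $\inc{z_1},\inc{z_2}$ stay rainbow because each color placed at $z$ (resp.\ at $z_1,z_2$) lies in the pair of colors $\varphi$ assigns to a boundary edge, which is part of the rainbow set at $z_1$ or $z_2$ under $\varphi$, while the other edges at $z_1,z_2$ are untouched. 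This contradicts the induction hypothesis.

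\emph{A $7$-coloring.} Let $\psi$ be a conflict-free incidence $7$-coloring of $G$, restrict it to $G-z$ (resp.\ $G-z_1z_2$), and extend across the configuration. Since $z_1$ and $z_2$ have degree at most $2$ there, at most $4$ of the $7$ colors occur on the incidences at each of them, and — crucially — the two colors $\psi$ uses on $\inc{zz_1}$ (resp.\ $\inc{z_1z_2}$) and the two it uses on $\inc{zz_2}$ are among the still-free colors at $z_1$ (resp.\ at $z_1$ and at $z_2$). So I set the boundary incidence-pairs of the configuration equal to those $\psi$-colors: this makes them disjoint for $G_2,G_4,G_8$ (the four colors coming from $\inc{zz_1}\cup\inc{zz_2}=\inc{z}$ are distinct) and equal for $H_t$ (both set to the color-pair of $\inc{z_1z_2}$). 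The hypothesis of the matching one of Lemmas \ref{l6}--\ref{l9} is now satisfied, and its conclusion — whose proof only introduces a bounded number of extra colors, of which there are enough to spare among the $7$ — extends the partial coloring to all incidences of the configuration; since the remaining configuration vertices are internal and the boundary colors were chosen free at $z_1,z_2$, no new conflict with $G-z$ arises. Hence $G'$ has a conflict-free incidence $7$-coloring, and together with the previous paragraph this gives $\chi^{c}_i(G')=7$, completing the induction.

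The step I expect to be the main obstacle is the ``no $6$-coloring'' part: one must verify, configuration by configuration, that the coloring $\psi$ of $G$ read off from a putative $6$-coloring $\varphi$ of $G'$ is genuinely conflict-free — that routing colors back through the deleted vertex $z$, or across the deleted edge $z_1z_2$, creates no monochromatic conflicting pair — and this is exactly where the disjointness/equality conclusions of Lemmas \ref{l2}--\ref{l5} are indispensable, since otherwise $\inc{z}$ (resp.\ $\inc{z_1z_2}$) need not be rainbow. The arguments for the four configuration types run in parallel, but each carries its own bookkeeping.
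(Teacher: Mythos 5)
Your proposal is correct and follows essentially the same route as the paper: induction on the order with base case $K_4^+$ (Lemma \ref{k4}), using Lemmas \ref{l2}--\ref{l5} to pull a hypothetical $6$-coloring back through the operation to contradict the induction hypothesis, and Lemmas \ref{l6}--\ref{l9} to push a $7$-coloring forward across the pasted configuration (the disjointness of the boundary pairs coming, as you note, from $\inc{zz_1}\cup\inc{zz_2}\subseteq\inc{z}$). The only difference is notational (you call the larger graph $G'$ where the paper calls it $G$), and your explicit bookkeeping of which end of $zz_1$ receives which color is a harmless refinement of the paper's set-level assignment $\varphiinc{zz_1}=\phiinc{vx}$.
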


\begin{proof}
We proceed by induction on $|G|$. Since the smallest graph in $\mathcal{P}$ is $K_4^+$, and $\chi_i^{c}(K_4^+)=7$ by Lemma \ref{k4}, the proof of the base case has been done. Now assume $|G|>5$. By the construction of $\mathcal{P}$, we meet four cases.
Here and elsewhere, once $G$ contains a configuration as shown in Figure \ref{fig:operation1}, we use the same labelling of any vertex appearing on the configuration as the one marked in the corresponding picture.

\vspace{2mm}\textbf{Case 1.} \textit{
There is a graph $G'\in \mathcal{P}$ and a degree 2 vertex $z$ of $G'$ such that $G=G'\sqcup_z G_2$ (or $G=G'\sqcup_z G_4$, or $G=G'\sqcup_z G_8$, respectively).}

\vspace{2mm}
By the induction hypothesis, $\chi_i^{c}(G')=7$. Let $z_1,z_2$ be two neighbors of $z$ in $G'$ and let $\varphi$ be a conflict-free incidence $7$-coloring of $G'$. Clearly, $\varphiinc{zz_1}\cap  \varphiinc{zz_2}=\emptyset$. We construct a conflict-free incidence $7$-coloring $\phi$ of $G$ as follows.
Let $\phiinc{vx}=\varphiinc{zz_1}$ and $\phiinc{wy}=\varphiinc{zz_2}$ (or $\phiinc{u_1x}=\varphiinc{zz_1}$ and $\phiinc{v_1y}=\varphiinc{zz_2}$ ,  or  $\phiinc{u_2x}=\varphiinc{zz_1}$ and $\phiinc{v_1y}=\varphiinc{zz_2}$ , respectively). This makes a partial incidence coloring of the configuration $G_2$ (or $G_4$, or $G_8$, respectively) such that $\varphiinc{vx}\cap \varphiinc{wy}=\emptyset$ (or $\varphiinc{u_1x}\cap \varphiinc{v_1y}=\emptyset$, or $\varphiinc{u_2x}\cap \varphiinc{v_1y}=\emptyset$, respectively). By Lemma \ref{l6} (or Lemma \ref{l7}, or Lemma \ref{l8}, respectively), $\varphi$ can be extended to a conflict-free incidence $7$-coloring of the configuration $G_2$ (or $G_4$, or $G_8$, respectively)  and thus any two conflicting incidences of $I(E(G)\setminus E(G'))$  receive distinct colors.
Now for every edge $e\in E(G)\cap E(G')$, let $\phiinc{e}=\varphiinc{e}$. This completes a $7$-coloring of the incidences of $G$ and it is easy to check that this coloring is  conflict-free.

On the other hand, we show that $G$ admits no conflict-free incidence $6$-coloring. Suppose, for a contradiction, that $\phi$ is a conflict-free incidence $6$-coloring of $G$. By Lemma \ref{l2} (or Lemma \ref{l3}, or Lemma \ref{l4}, respectively), $\phiinc{vx}\cap \phiinc{wy}=\emptyset$ (or $\phiinc{u_1x}\cap \phiinc{v_1y}=\emptyset$, or $\phiinc{u_2x}\cap \phiinc{v_1y}=\emptyset$, respectively).
This makes us possible to construct a conflict-free incidence $6$-coloring $\varphi$ of $G'$
by setting $\varphiinc{zz_1}=\phiinc{vx}$, $\varphiinc{zz_2}=\phiinc{wy}$, (or $\varphiinc{zz_1}=\phiinc{u_1x}$, $\varphiinc{zz_2}=\phiinc{v_1y}$, or $\varphiinc{zz_1}=\phiinc{u_2x}$, $\varphiinc{zz_2}=\phiinc{v_1y}$, respectively) and $\varphiinc{e}=\phiinc{e}$ for every edge $e\in  E(G')\setminus E(G)$.
This is a contradiction.

\vspace{2mm}\textbf{Case 2.} \textit{
There is a graph $G'\in \mathcal{P}$ and an edge $z_1z_2$ of $G'$ such that $G=G'\vee_{z_1z_2} H_i$.}

By the induction hypothesis, $\chi_i^{c}(G')=7$. Let $\varphi$ be a conflict-free incidence $7$-coloring of $G'$.
We construct a conflict-free incidence $7$-coloring $\phi$ of $G$ as follows.
Let $\phiinc{x_{i-1}x_i}=\phiinc{y_{i-1}y_i}=\varphiinc{z_1z_2}$. This makes a partial incidence coloring of the configuration $H_i$ such that $\phiinc{x_{i-1}x_i}=\phiinc{y_{i-1}y_i}.$
By Lemma \ref{l9}, $\phi$ can be extended to a conflict-free incidence $7$-coloring of the configuration $H_i$.
Now for every edge $e\in E(G)\cap E(G')$, let $\phiinc{e}=\varphiinc{e}$. This completes a $7$-coloring of the incidences of $G$ and it is easy to check that this coloring is  conflict-free.

On the other hand, we show that $G$ admits no conflict-free incidence $6$-coloring. Suppose, for a contradiction, that $\phi$ is a conflict-free incidence $6$-coloring of $G$. By Lemma \ref{l5}, $\phiinc{x_{i-1}x_i}=\phiinc{y_{i-1}y_i}$.
This makes us possible to construct a conflict-free incidence $6$-coloring $\varphi$ of $G'$
by setting $\varphiinc{z_1z_2}=\phiinc{x_{i-1}x_i}$ and $\varphiinc{e}=\phiinc{e}$ for every edge $e\in  E(G')\setminus E(G)$.
This is a contradiction.
\end{proof}

Algorithm \ref{algo:color-p} summarises the idea of proving Theorem \ref{thm:p}, showing how we can construct a conflict-free incidence $7$-coloring of a graph in $\mathcal{P}$ efficiently. Now we are ready to prove a more general result as follows.

\begin{algorithm}[htp]
\BlankLine
\KwIn{A graph $G\in \mathcal{P}$;}
\KwOut{A conflict-free incidence $7$-coloring $\varphi$ of $G$.}
\BlankLine
$i\gets 0;$\\
$G_0\gets G;$\\

\While{$G_i\not\cong K_4^-$}  
{
\eIf{there is a graph $G'\in \mathcal{P}$ with a degree 2 vertex $z$ such that $G_i=G'\sqcup_z G_t$ for some $t\in \{2,4,8\}$;}
{$G_{i+1}\gets G'$;\\
${\rm sign}_{i}\gets t$;\\}
{Find a graph $G'\in \mathcal{P}$ with an edge $z_1z_2$ such that $G_i=G'\vee_{z_1z_2} H_t$ for some integer $t$;\\
$G_{i+1}\gets G'$;\\
${\rm sign}_{i}\gets 0$;\\}
$i\gets i+1;$\\
}
\tcc{We obtain a series $G_0,G_1,\ldots,G_i$ of graphs in $\mathcal{P}$ where $G_0=G$ and $G_i=K_4^-$.}
Construct a conflict-free $7$-coloring $\varphi_i$ of $G_i$ by Lemma \ref{k4};\\
\For{$j=i-1$ to $0$}
{
Extend $\varphi_{j+1}$ to a conflict-free $7$-coloring $\varphi_j$ of $G_j$ by Lemma \ref{l6}, \ref{l7}, \ref{l8}, or \ref{l9} whenever ${\rm sign}_{j}$ equals to 2, 4, 8, or 0, respectively;\\
}
$\varphi\gets \varphi_0$;
\caption{\textbf{COLOR-CLASS-P}($G$)}
\label{algo:color-p}
\end{algorithm}

\begin{algorithm}[htp]
\BlankLine
\KwIn{A graph $G\in \mathcal{P^+}$;}
\KwOut{A conflict-free incidence $7$-coloring $\varphi$ of $G$.}
\BlankLine

\eIf{$G\in \mathcal{P}$}
{{\rm \textbf{COLOR-CLASS-P}}($G$);\label{aa}\\
\tcc{The coloring outputted by line \ref{aa} is denoted by $\varphi$.}}
{Find a subgraph $H\in \mathcal{P}$ of $G$ with a vertex $u$ that has exactly two neighbors $v$ and $w$ in $H$;\\
$H' \gets$ the graph with vertex set $V(G)\setminus (V(H)\setminus \{u\})$ and edge set $(E(G)\setminus E(H))\cup \{ux\}$;\\
$x\gets$ the unique neighbor of $u$ in $H'$;\\
\eIf{$H'\in \mathcal{P^+}$}
{
\textbf{COLOR-CLASS-P-PLUS}($H'$);\label{a}\\
\tcc{The coloring outputted by line \ref{a} is denoted by $\phi'$.}
}
{
Find a proper edge $3$-coloring $\varphi'$ of $H'$ by Theorem \ref{edge-col};\\
\For{each edge $e\in H'$}
{$\phiprimeinc{e}\gets \{\varphi'(e),\varphi'(e)+3\};$}
}
\textbf{COLOR-CLASS-P}($H$);\label{b}\\
\tcc{The coloring outputted by line \ref{b} is denoted by $\phi$.}
Exchange (if necessary) the colors of $\phi$ so that $\phiinc{uv}$, $\phiinc{uw}$, and $\phiprimeinc{ux}$ are pairwise disjoint;\\
$\varphi \gets $ the coloring obtained via combing $\phi'$ with $\phi$;
}
\caption{\textbf{COLOR-CLASS-P-PLUS}($G$)}
\label{alg: color-p+}
\end{algorithm}

\begin{thm}\label{thm:p+}
If $G\in \mathcal{P}^+$, then $\chi_i^{c}(G)=7$.
\end{thm}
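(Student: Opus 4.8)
The plan is to reduce the general case $G\in\mathcal{P}^+$ to the already-settled case $G\in\mathcal{P}$ (Proposition~\ref{thm:p}) together with the edge-coloring result Theorem~\ref{edge-col}, exactly as Algorithm~\ref{alg: color-p+} suggests. If $G\in\mathcal{P}$ we are done by Proposition~\ref{thm:p}, so assume $G\in\mathcal{P}^+\setminus\mathcal{P}$. By definition of $\mathcal{P}^+$, $G$ contains a subgraph $H\in\mathcal{P}$, and since $G$ is a proper supergraph one may choose $H$ so that there is a vertex $u\in V(H)$ having exactly two neighbours $v,w$ inside $H$ (otherwise $H$ would be a component of $G$ equal to $G$ itself, as $\Delta(G)=3$). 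Form $H'$ by deleting $V(H)\setminus\{u\}$ from $G$ and attaching a single new pendant edge $ux$ at $u$; then $H$ and $H'$ share only the vertex $u$, every edge of $G$ lies in exactly one of $H$, $H'$ except that the edge $ux$ of $H'$ is a ``placeholder'' for the rest of $G$. Note $\Delta(H')\le 3$ and $u$ has degree $1$ in $H'$, so $H'$ is not an odd cycle; hence either $H'\in\mathcal{P}^+$ (handled by induction on $|G|$, which is legitimate since $|H'|<|G|$) or, by Theorem~\ref{edge-col}, $H'$ is of class one and admits a proper edge $3$-coloring $\varphi'$, which we lift to a conflict-free incidence $6$-coloring $\phi'$ of $H'$ by the doubling trick of Proposition~\ref{prop:1}: $\phi'(\inc{e})=\{\varphi'(e),\varphi'(e)+3\}$.

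The lower bound $\chi_i^c(G)\ge 7$ is immediate: $G$ contains $H\supseteq K_4^+$ (or more directly $H\in\mathcal{P}$), and by Proposition~\ref{thm:p} even $H$ alone has no conflict-free incidence $6$-coloring; since an incidence $6$-coloring of $G$ restricts to one of $H$, none exists. So the work is the upper bound $\chi_i^c(G)\le 7$. For this, colour $H'$ as above with at most $6$ colours, obtaining $\phi'$; in $H'$ the vertex $u$ has a single incident edge $ux$, so $\phi'(\inc{ux})$ is some pair, say $\{1,2\}$. Now colour $H\in\mathcal{P}$ by Proposition~\ref{thm:p} with $7$ colours, obtaining $\phi$. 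The key local fact is that $\phi(\inc{uv})$ and $\phi(\inc{uw})$ are disjoint $2$-element sets (they are pairs of incidences conflicting at $u$), so $\phi(\inc{uv})\cup\phi(\inc{uw})$ has size $4$; after permuting the $7$ colours of $\phi$ we may assume $\phi(\inc{uv})\cup\phi(\inc{uw})\subseteq\{3,4,5,6,7\}$ and is disjoint from $\{1,2\}=\phi'(\inc{ux})$. Then $\phi(\inc{uv})$, $\phi(\inc{uw})$, $\phi'(\inc{ux})$ are pairwise disjoint. Define $\varphi$ on $E(G)$ by $\varphi(\inc{e})=\phi(\inc{e})$ for $e\in E(H)$ and $\varphi(\inc{e})=\phi'(\inc{e})$ for $e\in E(G)\setminus E(H)=E(H')\setminus\{ux\}$.

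It remains to verify $\varphi$ is conflict-free, i.e.\ that for every vertex $t$ the incidences in $\inc{t}$ receive distinct colours. If $t\ne u$ then all edges of $G$ at $t$ lie entirely in $H$ or entirely in $H'$ (the two subgraphs meet only at $u$), so $\inc{t}$ in $G$ coincides with $\inc{t}$ in $H$ or in $H'$, and rainbowness is inherited from $\phi$ or $\phi'$. The only vertex needing attention is $u$: its incident edges in $G$ are the two $H$-edges $uv,uw$ plus the $H'\setminus\{ux\}$ edges at $u$; but the edges of $H'$ at $u$ are exactly the replacement of $ux$, whose incidence set in $G$ inherits the colours that $\phi'$ assigned around $u$ in $H'$ — and in $H'$, the incidences at $u$ consumed exactly the two colours $\{1,2\}$ through the single edge $ux$. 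Care is needed here: the neighbours of $u$ in $G\setminus H$ need not be a single vertex $x$, so one must argue that the incidences $(u,e)$ for $e\in E(G)\setminus E(H)$ together with the incidences at the far endpoints that conflict at $u$ all still form a rainbow set of size $\le 6$, using that $\phi'$ restricted to the ``$u$-side'' of $H'$ is rainbow and the edge $ux$ faithfully records the colour pattern at $u$. Once this bookkeeping is done, $\inc{u}$ in $G$ is rainbow because the $H$-part uses colours from $\phi(\inc{uv})\cup\phi(\inc{uw})\subseteq\{3,\dots,7\}$ and the $H'$-part uses colours from $\{1,\dots,6\}$ in the pattern dictated by $\phi'$ around $u$, and the disjointness arranged above keeps the two blocks from colliding.

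The main obstacle I expect is precisely this last gluing verification at $u$: one has to be careful that replacing all of $G\setminus H$ by a single pendant edge $ux$ really does capture every conflict that an incidence at $u$ can participate in, and that the exchange of colours in $\phi$ can always be performed to make the three incidence-pairs pairwise disjoint (this needs $7\ge 2+2+2$, which is tight and is exactly why $6$ colours fail). Everything else — the lower bound, the edge-coloring lift, the induction setup — is routine given the earlier lemmas.
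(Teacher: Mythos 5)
Your proposal is correct and follows essentially the same route as the paper: split $G$ at the unique degree-$2$ vertex $u$ of a subgraph $H\in\mathcal{P}$, colour $H$ with $7$ colours via Proposition~\ref{thm:p} and colour $H'$ with at most $7$ colours (by induction if $H'\in\mathcal{P}^+$, otherwise by doubling a proper edge $3$-colouring from Theorem~\ref{edge-col}), then permute the colours of $\phi$ so that $\phi(\inc{uv})$, $\phi(\inc{uw})$, $\phi'(\inc{ux})$ are pairwise disjoint, with the lower bound coming from $\chi_i^c(G)\ge\chi_i^c(H)=7$. The one ``obstacle'' you flag is vacuous: since $\Delta(G)\le 3$, $d_H(u)=2$, and every other vertex of $H$ already has degree $3$ in $H$, the vertex $u$ has exactly one neighbour $x$ outside $H$ and is a cut-vertex of $G$, so $ux$ is a genuine edge of $G$ (not a placeholder), $H$ and $H'$ meet only at $u$, and the gluing check at $u$ involves only the three edges $uv$, $uw$, $ux$.
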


\begin{proof}
We proceed by induction on $|G|$. Note that the base case is supported by Lemma \ref{k4}.
By the definition of $\mathcal{P}$, every graph in $\mathcal{P}$ has exactly one vertex of degree 2, besides which all vertices are of degree 3. 
By Proposition \ref{thm:p}, we assume $G\in \mathcal{P}^+\setminus \mathcal{P}$. 

Suppose that $G$ contains a graph $H\in \mathcal{P}$ as a proper subgraph. Let $u$ be the unique vertex of degree 2 of $H$ and let $v$ and $w$ be the two neighbors of $u$ in $H$. Since $\Delta(G)\leq 3$ and $G$ is connected, the degree of $u$ in $G$ must be 3. Let $x$ be the third neighbor of $u$ in $G$. Since every vertex in $V(H)\setminus \{u\}$ has degree 3 in $H$ (and thus in $G$), $u$ is a cut-vertex of $G$. 

Let $H'$ be the subgraph of $G$ containing $u$ such that $V(H')\cap V(H)=\{u\}$ and $V(H')\cup V(H)=V(G)$.
Since $u$ has degree 1 in $H'$, $H'$ is not an odd cycle. Therefore, if $H'\in \mathcal{P}^+$, then $\chi_i^{c}(H')=7$ by the induction hypothesis, and if $H'\not\in \mathcal{P}^+$, then $\chi'(H')=\Delta(H')\leq 3$ by Theorem \ref{edge-col} and thus $\chi_i^{c}(H')\leq 6$ by Proposition \ref{prop:1}. In each case, there is a conflict-free incidence $7$-coloring $\phi'$ of $H'$.

Since $H\in \mathcal{P}$, there is a conflict-free incidence $7$-coloring $\phi$ of $H$ by Proposition \ref{thm:p}. We permute (if necessary) the colors of $\phi$ so that $\phiinc{uv}$, $\phiinc{uw}$, and $\phiprimeinc{ux}$ are pairwise disjoint, and then obtain a 
conflict-free incidence $7$-coloring of $G$ by combining $\phi'$ with $\phi$. This implies $\chi_i^{c}(G)\leq 7$.

On the other hand, $\chi_i^{c}(G)\geq \chi_i^{c}(H)=7$. Hence $\chi_i^{c}(G)=7$.
\end{proof}

Algorithm \ref{alg: color-p+} shows the idea of constructing a conflict-free incidence $7$-coloring of a give graph in $\mathcal{P^+}$.
Now that we have Theorems \ref{cycle}, \ref{thm:notp}, and \ref{thm:p+}, the conflict-free incidence chromatic number of connected outer-1-planar graphs (and thus all outer-1-planar graphs) can be completely determined by Theorem \ref{out-1-planar}.
Algorithm \ref{algo:color-o1p} shows an approach to efficiently construct a conflict-free incidence $\chi_i(G)$-coloring $\varphi$ of a connected out-1-planar graph $G$.

\begin{thm} \label{out-1-planar}
\[
   \chi^{c}_i(G)= 
   \begin{cases}
         6
        &\text{if } G\cong C_3,
        \\2\Delta(G)
        &\text{if } G\not\in \mathcal{P}^+~and~G ~is ~not ~an ~odd ~cycle,
       \\
       
        2\Delta(G)+1
        &\text{otherwise}
        \end{cases}
\]
for every connected outer-$1$-planar graph $G$.
\end{thm}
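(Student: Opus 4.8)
The plan is to deduce Theorem \ref{out-1-planar} by a short case analysis, dispatching each connected outer-$1$-planar graph $G$ to whichever of Theorems \ref{cycle}, \ref{thm:notp}, and \ref{thm:p+} applies. First I would record the relevant partition: every connected outer-$1$-planar graph falls into exactly one of the classes (a) $G\in\mathcal{P}^+$; (b) $G\notin\mathcal{P}^+$ and $G$ is an odd cycle; (c) $G\notin\mathcal{P}^+$ and $G$ is not an odd cycle. These are visibly exhaustive and pairwise disjoint. Moreover, every member of $\mathcal{P}^+$ contains a graph of $\mathcal{P}$ as a subgraph and hence has a vertex of degree $3$, while $\Delta\le 3$ by the definition of $\mathcal{P}^+$; thus $\Delta(G)=3$ for every $G\in\mathcal{P}^+$, whereas a cycle has $\Delta=2$. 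In particular no odd cycle lies in $\mathcal{P}^+$, so $C_3\notin\mathcal{P}^+$, and the case $G\cong C_3$ of the displayed formula sits inside class (b).

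Next I would evaluate $\chi_i^{c}(G)$ in each class. In class (c), Theorem \ref{thm:notp} gives $\chi_i^{c}(G)=2\Delta(G)$, which is the middle branch. In class (a), Theorem \ref{thm:p+} gives $\chi_i^{c}(G)=7$, and since $\Delta(G)=3$ this equals $2\Delta(G)+1$, the last branch; note $G\not\cong C_3$ here. In class (b) we have $G\cong C_n$ for some odd $n\ge 3$: if $n=3$, Theorem \ref{cycle} gives $\chi_i^{c}(G)=6$, the first branch; if $n\ge 5$, Theorem \ref{cycle} gives $\chi_i^{c}(G)=5=2\cdot 2+1=2\Delta(C_n)+1$, the last branch, again with $G\not\cong C_3$. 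Reading off the three branches of the formula, namely $\{C_3\}$, class (c), and class (a) together with the $n\ge 5$ part of class (b), these partition all connected outer-$1$-planar graphs, so the formula follows.

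For the accompanying algorithmic claim I would simply route the computation: if $G$ is a cycle, call Algorithm \ref{algo:cycle}; if $G$ lies in class (c), compute a proper edge $\chi'(G)$-colouring of $G$ (whose value is given by Theorem \ref{edge-col}, with the underlying argument being constructive) and apply the doubling construction in the proof of Proposition \ref{prop:1}; and if $G\in\mathcal{P}^+$, call Algorithm \ref{alg: color-p+} (which in turn invokes Algorithm \ref{algo:color-p}). I do not expect any genuine obstacle here: the substantive content, namely the lower bounds $\chi_i^{c}\ge 2\Delta+1$ for odd cycles of length at least $3$ and for graphs in $\mathcal{P}^+$ together with matching colourings, was already established in Theorems \ref{cycle} and \ref{thm:p+}. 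The only thing needing care is the bookkeeping, checking that the case split is exhaustive and disjoint and that the value $7$ is correctly rewritten as $2\Delta(G)+1$ for $\mathcal{P}^+$ graphs.
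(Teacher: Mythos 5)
Your proposal is correct and takes essentially the same route as the paper, which offers no separate proof of Theorem \ref{out-1-planar} but simply asserts that it follows from Theorems \ref{cycle}, \ref{thm:notp}, and \ref{thm:p+}; your case analysis is precisely that deduction, written out with the (easy but worth recording) observations that $\Delta(G)=3$ for $G\in\mathcal{P}^+$ and that no cycle lies in $\mathcal{P}^+$. The algorithmic routing you describe matches Algorithm \ref{algo:color-o1p}.
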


\begin{algorithm}[htp]
\BlankLine
\KwIn{A connected out-1-planar graph $G$;}
\KwOut{A conflict-free incidence $\chi_i(G)$-coloring $\varphi$ of $G$.}
\BlankLine
\tcc{This algorithm constructs an optimal conflict-free incidence coloring of a connected outer-1-planar graph $G$.}
\eIf{$G$ is a cycle}
{\textbf{COLOR-CYCLE}($|G|$);}
{
\eIf{$G\in \mathcal{P^+}$ }
{\textbf{COLOR-CLASS-P-PLUS}($G$);}
{
Find a proper edge $\Delta(G)$-coloring $\phi$ of $G$ by Theorem \ref{edge-col};\\
\For{each edge $e\in G$}
{$\varphiinc{e}\gets \{\phi(e),\phi(e)+\Delta(G)\};$}
}

}

\caption{\textbf{COLOR-O1P}($G$)}
\label{algo:color-o1p}
\end{algorithm}

\section{Open problem}\label{sec:5}

To end this paper, we leave an open problem relative to the complexity of the conflict-free incidence coloring.
As one can know from Proposition \ref{prop:3} that $\chi^{c}_i(G)\in\{2\Delta(G),2\Delta(G)+1,2\Delta(G)+2\}$
for every simple graph $G$, an interesting problem is to investigate the complexity of the following question.

 \textsc{\textbf{Conflict-free incidence coloring Problem (CFICP)}}\\
\indent Input: A graph $G$ and a positive integer $k$.\\
\indent Question: Is there a conflict-free incidence $k$-coloring of $G$?\\
We conjecture that \textsc{\textbf{CFICP}} is NP-Complete.

\bibliographystyle{abbrv}
\bibliography{ref}

\end{document}